\documentclass[11pt]{amsart}
\usepackage[margin=1in]{geometry} 
\usepackage[utf8]{inputenc}
\usepackage[T1]{fontenc}
\usepackage{amsmath,amsthm,amssymb}
\usepackage{mathtools,stmaryrd}
\usepackage[english]{babel}
\DeclarePairedDelimiterX{\tbr}[1]{\llbracket}{\rrbracket}{\mkern-5mu\delimsize\llbracket \mathopen{}#1 \mathclose{} \delimsize\rrbracket\mkern-5mu}
\usepackage{hyperref}
\hypersetup{colorlinks=false, linkcolor=blue}
\usepackage{cleveref}
\usepackage{dsfont}
\usepackage{tikz-cd}
\usepackage[table]{xcolor}
\definecolor{knowncell}{gray}{0.92}
\definecolor{priorcell}{RGB}{214,234,248}
\definecolor{newcell}{RGB}{212,239,223}

\usepackage{array}
\usepackage{tikz}
\usetikzlibrary{patterns,patterns.meta}
\newcolumntype{C}[1]{>{\centering\arraybackslash}m{#1}}

\usepackage{calligra,mathrsfs}
\usepackage[shortlabels]{enumitem}
\usepackage{array}
\usepackage{setspace}

\usepackage{booktabs, mathtools}

\usepackage[%
    backend = biber,
    style = alphabetic,
    citestyle = alphabetic,
    maxalphanames = 99,
    maxnames = 99,
    backref = true,
    isbn = false,
    doi = false,
    url = false,
    ]{biblatex}
\newbibmacro{string+url}[1]{
    \iffieldundef{url}{
          #1
    }{
      \href{\thefield{url}}{#1}
    }
}

\DeclareFieldFormat
    [article,misc,inbook,incollection,inproceedings,patent,thesis,unpublished]
  {title}{\usebibmacro{string+url}{\mkbibquote{#1\isdot}}}

\newcommand{\Z}{\mathbb Z}
\newcommand{\N}{\mathbb N}

\renewcommand{\Im}{\operatorname{Im}}

\DeclareMathOperator{\codim}{codim}
\DeclareMathOperator{\str}{str}
\DeclareMathOperator{\prk}{prk}

\theoremstyle{plain}

\newtheorem{theorem}{Theorem}[section]
\newtheorem{prop}[theorem]{Proposition}
\newtheorem{lemma}[theorem]{Lemma}
\newtheorem{claim}[theorem]{Claim}
\newtheorem{cor}[theorem]{Corollary}
\newtheorem*{cor*}{Corollary}

\theoremstyle{definition}
\newtheorem{defn}[theorem]{Definition}
\newtheorem{example}[theorem]{Example}
\newtheorem{remark}[theorem]{Remark}

\crefname{prop}{proposition}{propositions}
\crefname{claim}{claim}{claims}
\Crefname{claim}{Claim}{Claims}
\Crefname{prop}{Proposition}{Propositions}
\crefname{defn}{definition}{definitions}
\Crefname{defn}{Definition}{Definitions}
\crefname{cor}{corollary}{corollaries}
\Crefname{cor}{Corollary}{Corollaries}
\crefname{conj}{conjecture}{conjectures}
\Crefname{conj}{Conjecture}{Conjectures}

\begin{document}
\title{Closedness of bounded strength and partition rank loci}
\author{Benjamin Baily}
\author{Alessandro Danelon}
\thanks{The first author was supported by NSF grant DMS-2101075 and a Simons dissertation fellowship.}
\begin{abstract}
    We show the existence of a quartic form of strength $3$ in the closure of the strength at most $2$ locus, answering a question of Ballico--Bik--Oneto--Ventura in the negative. 
    In the analogous setting of partition rank as introduced by Naslund, we further show that the locus of tensors of partition rank at most $2$ is closed, but the locus of tensors of partition rank at most $3$ is not. Together with a suspension lemma, this answers for all $r$ whether the locus of forms/tensors of strength/partition rank at most $r$ is Zariski-closed.
    \end{abstract}
\maketitle

\section{Introduction}

Consider a homogeneous degree-$d$ form $f$ in the polynomial ring $R = k[x_1,\dots, x_n]$, where $k$ is an algebraically closed field of characteristic zero.
The complexity of $f$ can be quantified as the smallest $r$ such that $f = g_1h_1+\dots + g_rh_r$ for some homogeneous forms $g_1, \dots, g_r, h_1, \dots, h_r$ of strictly-positive degree.
The number $r$ is known as the \textit{strength} of $f$, denoted by $\str(f)$, and it represents one of the many ways to decompose polynomials, and, more generally, tensors. Schmidt introduced strength \cite{schmidt_density_1985} as an auxiliary quantity used to prove uniform bounds on the density of integer points of varieties, and strength appeared again \cite{ah_stillman_2020} in Ananyan--Hochster's proof of Stillman's conjecture. As an object of study in its own right, strength has been studied in combinatorics and algebraic geometry (\cite{gt_distribution_2009}, \cite{kz_properties_2020}, \cite{ess_hartshorne_2021}, \cite{er_transfer_2024}). 

Despite being omnipresent in mathematics, the geometry of tensor decomposition is still little understood.
One natural question is whether the locus of tensors of bounded complexity is defined by polynomial equations, namely, if it is Zariski-closed.
The present paper answers this question in all ranks for two specific complexity measures.
The first is the strength of polynomials; the second is the partition rank of multilinear forms.
Our article completes a work by Ballico--Bik--Oneto--Ventura \cite{ballico-bik-oneto-ventura}, and we summarize in the following table the state of the art for closure conditions for strength and partition rank.

\begin{center}
\begin{tabular}{c|C{2.3cm}|C{2.3cm}|C{2.3cm}|C{2.3cm}}
$r$ & $1$ & $2$ & $3$ & $\geq 4$\\[0.3em]
\hline
Locus of strength at most $r$
  & \cellcolor{knowncell} Closed
  & \cellcolor{newcell} Not closed
  \Cref{thm:str-2-not-closed}
  & \cellcolor{priorcell} Not closed \cite{ballico-bik-oneto-ventura}
  & \cellcolor{newcell} Not closed 
  \Cref{cor:str-higher}
  \\[0.3em]\hline
Locus of partition rank at most $r$
  & \cellcolor{knowncell} Closed
  & \cellcolor{newcell} Closed (!)
  \Cref{thm:prk_2_closed}
  & \cellcolor{newcell} Not closed \Cref{thm:prk_3_4}
  & \cellcolor{newcell} Not closed
  \Cref{cor:prk-higher}
  \\[0.3em]
\end{tabular}
\end{center}

Geometric rank \cite{kopparty-moshkovitz-zuiddam:geometricrank, geng-landsberg:geomrank}, slice rank, local rank (\cite{moshkovitz-zhu:quasilinear}), the $G$-stable rank \cite{derksen:gstable}, commutative and non-commutative rank, analytic rank, Birch rank are known to be closed, while strength, partition rank, tensor rank, subrank, Chow rank, Waring rank/symmetric rank are not.
The cause of closedness/non-closedness seems to depend on whether the rank is defined by notions depending on the geometry of the ambient space (codimension of some space, hyperplanes, etc) or by decomposition/factorization into smaller tensors.

\subsection{Structure of the paper}
The work \cite{ballico-bik-oneto-ventura} of Ballico--Bik--Oneto--Ventura proves that the locus of $d$-forms of bounded strength is not closed in general.
Using the theory of polynomial functors as described in \cite{bik-draisma-snowden:zerochar}, the authors showed the existence of a quartic form of strength $4$ sitting in the closure of forms with strength at most $3$.
However, they wondered in \cite[Question~5]{ballico-bik-oneto-ventura} if the locus of degree-$d$ homogeneous forms of strength at most two could still be closed for every $d$.
Building on the polynomial-functor approach of \cite{ballico-bik-oneto-ventura}, but working directly in an inverse limit of polynomial rings, we give a negative answer to Question~5 for $d = 4$ in Section~\ref{section:strength}.
We briefly discuss the idea of the proof below.

Consider the form
\begin{equation}\label{eq:defH}
    H \coloneqq x^2u + xyv + (y^2q + p(p-yw))
\end{equation}
with $F \coloneqq xy +tp$ and $G\coloneqq x^2 + txw+t^2q$.
Here the variables $x,y,w$ and $p,q,u,v$ are placeholders for, respectively, linear and quadratic forms.
The quartic $H$ was chosen to be a limit of quartics of strength at most $2$:
\begin{equation*}
    H = \lim_{t\to0}\frac{1}{t^2} \left(F\left(t^2v + tp - tyw - xy\right) + G\left(t^2u + y^2\right)\right).
\end{equation*}

We then prove that $H$ has strength $3$ as a polynomial in $x,y,w,p,q,u,v$, where $x,y,w$ have degree one and $p,q,u,v$ have degree two.
To pass from this formal statement to an actual quartic form in finitely many variables, we work in the graded inverse limit $R$ of the polynomial rings $k[x_1, \dots, x_n]$, whose degree-$d$ part consists of the (possibly infinite) formal sums of degree-$d$ monomials in the variables $x_1, x_2, \dots$.
By a theorem of Erman--Sam--Snowden \cite{erman_big_2019}, the ring $R$ is itself a polynomial ring.
Moreover, any tuple of forms of infinite collective strength is part of a system of variables of $R$.
Examples of such tuples are linearly independent linear forms together with the quadrics $p = x_4^2 + x_8^2 + \cdots$, $q = x_5^2 + x_9^2 + \cdots$, $u = x_6^2 + x_{10}^2 + \cdots$ and $v = x_7^2 + x_{11}^2 + \cdots$.
Setting all the other variables of $R$ to zero, a decomposition of $H$ of strength $2$ in $R$ would descend to a decomposition of strength $2$ in the variables $x,y,w,p,q,u,v$, which is impossible.
Finally, a result of Bik \cite{bik} on images of polynomial transformations implies that an element of $R$ has strength at most $2$ if and only if all its truncations to finitely many variables do.
Hence, for $n$ large enough, the truncation of $H$ to $k[x_1, \dots, x_n]$ is a quartic form of strength $3$, and it is still a limit of forms of strength at most $2$.
In contrast to \cite{ballico-bik-oneto-ventura}, this approach does not require describing explicitly the polynomial transformations through which a decomposition of strength $2$ would have to factor.
We collect the necessary background on inverse limits of polynomial rings in Section~\ref{section:inverse-limits}.

In Section~\ref{section:partitionrank}, we formulate and answer the analogous question for the partition rank of a tensor.
A tensor $T$ in $(k^n)^{\otimes d}$ naturally defines a multilinear form $T: (k^n)^{\oplus d}\to k$ corresponding to an element of the multigraded ring
\[
R^{(d)}_n\coloneqq k[\{x_i^{(j)}\}_{i\in [n], j\in [d]}, \deg(x_i^{(j)}) = e_j\in \Z_{\geq 0}^d]
\]
with degree $(1,\dots, 1)$.
We can think of $x_i^{(j)}$ as the dual basis vector $x_i$ of the $i$-th basis vector $v_i$ of the $j$-copy of $k^n$ decorated with the grading $e_j = (0, \dots, 0, 1, 0, \dots, 0)\in \mathbb{Z}^{d}$ with a $1$ in the $j$-th entry.
\begin{defn}[\cite{naslund}]
    The tensor $T$ has \textit{partition rank $r$} if $r$ is the minimum integer for which there exist elements $A_i, B_i\in R^{(d)}_n$ of nonzero multidegree $\underline{a}_i, \underline{b}_i \in \{0, 1\}^d$ with $\underline{a}_i + \underline{b}_i = (1, \dots, 1)$ for $i = 1, \dots, r$, such that $T = A_1B_1 + \dots + A_rB_r$.
\end{defn}
\begin{example}
    Let $V$ be a $2$-dimensional vector space over $k$ with basis $\{v_1, v_2\}$. 
    Let $\{x_1, x_2\}$ be the corresponding basis of $V^*$.
    Consider the tensor $T = v_1\otimes v_1 + 2 v_1 \otimes v_2 - 3 v_2 \otimes v_2$.
    Then $T$ corresponds to the form sending $(v_1, v_1) \mapsto 1, (v_1, v_2) \mapsto 2, (v_2, v_1) \mapsto -3, (v_2, v_2) \mapsto 0$, or equivalently, to the form $x_1^{(1)}x_1^{(2)} + 2 x_1^{(1)}x_2^{(2)} - 3 x_2^{(1)} x_1^{(2)}$.
    Note that $T$ has partition rank at most two as $T = x_1^{(1)}(x_1^{(2)} + 2 x_2^{(2)}) - 3 x_2^{(1)} x_1^{(2)}$.
\end{example}

In the setting of tensors and partition rank, we contribute two facts. The first is that the locus of partition-rank-at-most-two tensors is, in contrast to the case of strength, closed. We accomplish this by first supposing that an element $T$ of degree $(1,\dots, 1)$ can be written as
\[
T = \lim_{t\to 0}\frac{1}{t^b}A(t)B(t) + C(t)D(t)
\]
with $b\geq 0$ and $A(t),\dots, D(t)$ tensors in $ R_n^{(d)}[t]$ and (without loss of generality) $A(t), C(t)$ having no common factor. As a consequence, $T$ belongs to the ideal $I_0\coloneqq \lim_{t\to 0} (A(t), B(t))$, an initial ideal of a complete intersection ideal of height $2$. We show that the set of forms in $I_0$ of degree at most $(1,\dots, 1)$ is contained in a $2$-generated homogeneous ideal, from which it follows that $\prk(T)\leq 2$. Initial ideals of height $2$ complete intersections have been thoroughly studied \cite{ghk_1_2006,ghk_2_2007,ghk_3_2008}; while the authors do not directly use any results from op. cit., we note that both our positive result on the closure of the $\prk\leq 2$ locus and our negative result on the closure of the $\str\leq 2$ locus were inspired by a study of Goto--Heizner--Kim's work.

The second contribution is that the locus of partition-rank-at-most-three tensors is not closed in general.
We do this by showing the existence of a tensor of partition rank $4$ sitting in the closure of the locus of tensors of partition rank at most $3$.
The approach is the same as for strength.
We summarize our approach below.

We consider the polynomial: 
\begin{align*}
    T\coloneqq &p_{3,4}(x_1y_2-x_2y_1) + p_{2,4}(x_3y_1-x_1y_3) + p_{1,4}(x_2y_3-x_3y_2)\\ + &y_4(x_1p_{2,3} + x_2p_{1,3}+ x_3p_{1,2}) + x_4(y_1p_{2,3} + y_2p_{1,3} + y_3p_{1,2}),
\end{align*}
where the variables $x_i, y_i$ have grading $e_i$, and the variable $p_{i,j}$ have grading $e_i + e_j$.
These are placeholders for, respectively, 1-way and 2-way tensors.
We warn the reader that this is typically a confusing point: sometimes we look at $T$ as a polynomial in the variables $x_i, y_i, p_{i,j}$, and some other times we look at $T$ as a tensor given by the product of the tensors $x_i, y_i, p_{i,j}$.
This is due to an intrinsic property of tensors: they can be viewed as polynomials evaluated at some collection of ``algebraically independent'' tensors.
This fact follows from the fact that the graded inverse limit $R^{(d)}$ of the rings $R_n^{(d)}$ is itself a polynomial ring, a result on the same line of \cite{erman_big_2019}.
However, we will try to be clear on which point of view we take every time.

For any choice of tensors $x_i, y_i, p_{i,j}$, the tensor $T$ sits in the closure of the locus of partition-rank-at-most-3 tensors, as it is the following limit
\begin{align*}
T = \lim_{t\to 0} \left(
\begin{aligned}
  &A_{12}p_{34}+A_{13}p_{24}+A_{23}p_{14} \\
  &+ \frac{y_4}{t}(x_3A_{12}+x_2A_{13}+x_1A_{23}) + \frac{x_4}{t}(y_3A_{12}+y_2A_{13}+y_1A_{23})
\end{aligned}
\right),
\end{align*}
with
    \[
    A_{12}\coloneqq x_1y_2 - x_2y_1 + tp_{12},\quad A_{13}\coloneqq x_3y_1 - x_1y_3 + tp_{13}, \quad A_{23}\coloneqq x_2y_3 - x_3y_2 + tp_{23}.
    \]

We then show that $T$ has no decomposition of partition rank $3$ in the inverse-limit polynomial ring $R^{(4)}$ and, exactly as for strength, we conclude that some truncation of $T$ to finitely many variables has partition rank $4$.

Finally, we prove the ``suspension'' lemma for forms and tensors: adding to a form of strength (or partition rank) $k$ a suitable form of strength (partition rank) $1$ in disjoint variables gets us a form of strength/partition rank $k+1$.
Combined with the inverse limit technique, this allows us to answer, for all $r\geq 1$, whether the locus of forms/tensors of strength/partition rank $\leq r$ is closed.

\subsection{AI use disclosure} The authors used Claude Opus 5.5 (Anthropic) to improve the organization and clarity of the paper, including some of the wording in the text, and for computer-algebra checks. The ideas and arguments are the authors’ own; the authors independently reviewed all proofs and take full responsibility for the content.

\section{Inverse limits of polynomial rings}\label{section:inverse-limits}

In this section we develop the background needed for our proof technique, which we use both for strength (\Cref{section:strength}) and for partition rank (\Cref{section:partitionrank}).
The argument always follows the same three steps.
\begin{enumerate}
    \item We write down a \textit{formal} form (or tensor) $f$, namely a polynomial in a few graded variables that act as placeholders for forms (or tensors) of low degree.
    The element $f$ is a limit of elements of strength (or partition rank) at most $r$, but it does not admit a decomposition of strength (or partition rank) at most $r$ in the placeholder variables.
    \item We realize the placeholders as elements of the graded inverse limit of the polynomial rings in $n$ variables.
    By a result of Erman--Sam--Snowden \cite{erman_big_2019} this inverse limit is again a polynomial ring, and the placeholders can be chosen to be part of a system of variables.
    It follows that $f$ has no decomposition of strength (or partition rank) at most $r$ in the inverse limit either.
    \item A result of Bik on images of inverse limits of polynomial transformations \cite{bik} then produces a truncation of $f$ to finitely many variables with no such decomposition.
\end{enumerate}
For the convenience of the reader we treat the symmetric case (forms and strength) and the multigraded case (tensors and partition rank) in parallel, as the symmetric case might feel more natural at first.

\subsection{Polynomial functors and their limits}
Let $S^d$ denote the $d$-th symmetric power functor, i.e., for a vector space $V$ the space $S^d(V)$ contains the symmetric tensors of $V^{\otimes d}$.
For our purposes it is convenient to identify an element of $S^d(V)$ with a homogeneous degree-$d$ polynomial in variables $x_1, \dots, \allowbreak x_{\dim(V)}$.
For any linear map $\phi: V \to W$, the restriction of the Kronecker product $\phi^{\otimes d}: V^{\otimes d} \to W^{\otimes d}$ to the symmetric subspace gives the corresponding linear map $S^d_{\phi}: S^d(V) \to S^d(W)$.
Similarly, for a finite set $J$ we denote by $T^J$ the functor sending $V$ to $V^{\otimes J}$ and $\phi$ to $\phi^{\otimes J}$.
In characteristic zero, both $S^d$ and $T^J$ are \textit{polynomial functors}, i.e., finite direct sums of Schur functors; we refer to \cite{fulton-harris:reprfirstcourse} for an introduction to Schur functors and to \cite{bik-draisma-snowden:zerochar} for the general theory of polynomial functors.
In this paper we only need finite direct sums of the functors $S^d$ and $T^J$.

A map between vector spaces $f: V \to W$ is \textit{polynomial} if its coordinates with respect to a basis of $W$ are polynomial functions on $V$.

\begin{defn}
    Let $P$ and $Q$ be finite direct sums of the functors $S^d$ and $T^J$, considered as functors into the category of sets.
    A \textit{polynomial transformation} $\alpha: Q \to P$ is a natural transformation between $Q$ and $P$ such that for each finite dimensional vector space $V$ the map $\alpha_V : Q(V) \to P(V)$ is polynomial.
\end{defn}

\begin{example}
    Consider the polynomial transformation $\alpha: (S^1)^{\oplus 2} \oplus S^2 \to S^4$ sending $(X_1, X_2, X_3)$ to $X_1^2X_3 -X_2^4$.
    Then for every finite dimensional vector space $V$, $\alpha_V$ sends $(l_1, l_2, q) \in S^1(V)^{\oplus 2} \oplus S^2(V)$ to $l_1^2q - l_2^4 \in S^4(V)$.
\end{example}

The polynomial transformations relevant to us are given by multiplication.
For a tuple $\underline{a} = (a_1, \dots, a_r)$ of integers with $1 \leq a_i \leq d-1$, the map
\[
\sigma_{\underline{a}}: \bigoplus_{i=1}^r \left(S^{a_i}\oplus S^{d-a_i}\right) \to S^d, \qquad (g_1,h_1,\dots,g_r,h_r) \mapsto g_1h_1+\dots+g_rh_r
\]
is a polynomial transformation.
Allowing some of the $g_i, h_i$ to be zero, a form in $S^d(V)$ has strength at most $r$ if and only if it lies in the image of $\sigma_{\underline{a},V}$ for one of the finitely many tuples $\underline{a}$.
The analogous polynomial transformations $\alpha_{I_1,\dots,I_r}$ for partition rank are defined in \Cref{section:partitionrank}.

Consider a functor $P$ as above, and evaluate it on $k^n$ for every $n$.
The projections $\pi_n : k^n \to k^{n-1}$ onto the first $n-1$ coordinates make $(P(k^n), P_{\pi_n})_n$ an inverse system, and we denote by $P_\infty$ its inverse limit.
Given a point $p \in P_\infty$, we denote by $p_n$ its projection to $P(k^n)$, so that $p = (p_n)_{n \in \mathbb{N}}$.
The space $P_\infty$ is (typically) an infinite-dimensional algebraic variety.

The elements of $S^d_\infty$ are the (possibly infinite) formal sums of homogeneous degree-$d$ monomials in the variables $x_1, x_2, \dots$.
For example, the element $x_1^2 + x_2^2 + \cdots$ sits in $S^2_\infty$.
As a non-symmetric example, the inverse limit of the second tensor power $T^{[2]}$ is the space of $\mathbb{N}\times \mathbb{N}$-matrices with possibly infinitely many nonzero entries.
Note that new points arise from the limiting procedure, namely the ones that do not come from any $P(k^n)$: in the example of matrices, the infinite identity matrix is one of these points.

By naturality, a polynomial transformation $\alpha: Q \to P$ induces a map $\alpha_\infty : Q_\infty \to P_\infty$ given by $\alpha_\infty (q) = (\alpha_{k^n} (q_n))_{n\in \mathbb{N}}$.
We need the following compactness property of these maps.

\begin{lemma}\label{lemma:belonging}
    Let $\alpha: Q \to P$ be a polynomial transformation and let $f \in P_\infty$.
    Then $f \in \Im(\alpha_\infty)$ if and only if $f_n \in \Im(\alpha_{k^n})$ for every $n$.
\end{lemma}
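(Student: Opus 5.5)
The forward implication is formal. If $f=\alpha_\infty(q)$ for some $q=(q_n)_n\in Q_\infty$, then by the definition of $\alpha_\infty$ we have $f_n=\alpha_{k^n}(q_n)$ for every $n$. So $f_n\in\Im(\alpha_{k^n})$.

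For the converse I would set up a compactness (Mittag-Leffler) argument on fibres. Put
\[
X_n\coloneqq \alpha_{k^n}^{-1}(f_n)\subseteq Q(k^n).
\]
This is a nonempty Zariski-closed subset of the finite-dimensional space $Q(k^n)$ by hypothesis. Naturality of $\alpha$ with respect to $\pi_n$ gives $\alpha_{k^{n-1}}\circ Q_{\pi_n}=P_{\pi_n}\circ\alpha_{k^n}$. Since $P_{\pi_n}(f_n)=f_{n-1}$, the map $Q_{\pi_n}$ sends $X_n$ into $X_{n-1}$. Thus $(X_n)_n$ is an inverse system of nonempty sets, and its inverse limit is exactly $\alpha_\infty^{-1}(f)\subseteq Q_\infty$. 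It remains to show that this limit is nonempty.

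For $m\ge n$, let $Y_{n,m}\subseteq X_n$ be the image of $X_m$ under the composite projection $Q(k^m)\to Q(k^n)$. By Chevalley's theorem each $Y_{n,m}$ is constructible, and for fixed $n$ they form a decreasing chain in $m$. The key step is to show that this chain stabilizes: for each $n$ there is $m(n)$ with $Y_{n,m}=Y_{n,m(n)}$ for all $m\ge m(n)$.

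Granting this, the stable images $Y_n\coloneqq Y_{n,m(n)}$ are nonempty and surject onto one another under the projections. One then builds a compatible sequence $q_n\in Y_n$ inductively, choosing each $q_{n+1}\in Y_{n+1}$ over the previously chosen $q_n$. This gives a point $q\in Q_\infty$ with $\alpha_\infty(q)=f$.

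The stabilization is the main obstacle. A decreasing chain of constructible sets need not stabilize in general, so Noetherianity of $Q(k^n)$ alone is not enough. I would try to get it from the uniformity that polynomial functors provide, in two parts.

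First, the closures $\overline{Y_{n,m}}$ stabilize by Noetherianity of $Q(k^n)$.

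Second, I would show that $Y_{n,m}$ is in fact closed in $X_n$ once $m$ is large. The approach is to use the linear maps $k^n\hookrightarrow k^m$ and $GL_m$-equivariance to move preimages into a bounded "shape". Concretely, for $m$ large relative to the degrees in $Q$, every point of $X_m$ can be transported by an element of $GL_m$ fixing $k^n$ into a form depending on a bounded number of extra coordinates. Bik's topological-Noetherianity results for polynomial transformations say exactly this. They show that the images $\Im(\alpha_{k^n})$ and the associated fibre-projection images are controlled uniformly in $n$.

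In practice I would invoke Bik's theorem on images of $\alpha_\infty$ \cite{bik} at this point rather than reprove it, since the paper already relies on it. I would then verify that its conclusion yields precisely the Mittag-Leffler condition above.
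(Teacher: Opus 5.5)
Your forward direction is fine. The reduction of the converse to showing $\varprojlim_n X_n\neq\emptyset$ for $X_n=\alpha_{k^n}^{-1}(f_n)$ is also correct, and so is the Mittag-Leffler construction \emph{once} the images $Y_{n,m}$ are known to stabilize. But that stabilization is the entire content of the lemma, and your proposal does not establish it.

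The route you sketch for it fails, because the images $Y_{n,m}$ need not be closed in $X_n$ for any $m$. Take $\alpha\colon S^1\oplus S^1\to S^2$, $(a,b)\mapsto ab$, and $f=x_1x_5$. Then $f_2=0$, so $X_2=\{ab=0\}$ contains the origin. For $m\geq 5$, unique factorization gives $X_m=\{(\lambda x_1,\lambda^{-1}x_5),(\lambda x_5,\lambda^{-1}x_1):\lambda\in k^\times\}$. Hence $Y_{2,m}=\{(\lambda x_1,0),(0,\lambda x_1):\lambda\neq 0\}$, which is stable in $m$ but never closed, since its closure contains the origin of $X_2$. So stabilization cannot come from ``closures stabilize plus eventual closedness.'' The $GL_m$-transport idea has a more basic problem too. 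An element $g\in GL_m$ compatible with the projection to $k^n$ sends $X_m=\alpha^{-1}(f_m)$ to $\alpha^{-1}(g\cdot f_m)$, which is a different fibre unless $g$ fixes $f_m$. So it does not act on the set whose images you are trying to control.

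Your fallback, invoking Bik and then checking that this ``yields precisely the Mittag-Leffler condition,'' is a promissory note rather than a step. The paper proves the ``if'' direction simply by citing \cite[Lemma~4.5.24]{bik}, which \emph{is} that direction, i.e.\ the nonemptiness of $\varprojlim_n X_n$. Nonemptiness of an inverse limit does not imply stabilization of the images. So either cite that lemma directly, as the paper does, in which case your Mittag-Leffler scaffolding is superfluous, or give a genuine argument for the key step.

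There is one case where you could avoid stabilization altogether: if $k$ is uncountable. Then a decreasing chain of nonempty constructible sets has nonempty intersection. Applying this to the $Y_{n,m}$, and to their fibres over points of $\bigcap_m Y_{n,m}$, produces compatible choices. However, the paper allows any algebraically closed $k$ of characteristic zero, e.g.\ $\overline{\mathbb{Q}}$, where such intersections can be empty. There you need genuine input from the theory of polynomial functors, which is what the citation of Bik supplies.
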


\begin{proof}
    The ``only if'' direction follows from the definition of $\alpha_\infty$.
    The ``if'' direction is \cite[Lemma~4.5.24]{bik}.
\end{proof}

\subsection{The inverse limit rings}
Let $R_n \coloneqq k[x_1, \dots, x_n]$ with the standard grading, and let $R$ be the inverse limit of the diagram
\[
\dots \to R_n \to R_{n-1} \to \dots \to R_1,
\]
where $R_n \to R_{n-1}$ sets $x_n$ to zero, in the category of graded rings.
Concretely, $R = \bigoplus_{d \geq 0} R_d$ where $R_d = \varprojlim_n (R_n)_d = S^d_\infty$ is the space of formal $k$-linear combinations of degree-$d$ monomials in the variables $x_1, x_2, \dots$.
We denote by $\pi_n: R \to R_n$ the natural projections, which are homomorphisms of graded rings, and for $f \in R$ we write $f_n \coloneqq \pi_n(f)$.

In the multigraded setting we let $R_n^{(d)} \coloneqq k[x_i^{(j)} \mid i\in [n], j\in [d]]$ with the $\N^d$-grading given by $\deg(x^{(j)}_i) = e_j$, and we let $R^{(d)}$ denote the inverse limit of the diagram
\[
\dots \to R^{(d)}_n\to R^{(d)}_{n-1}\to \dots \to R_1^{(d)},
\]
where $R^{(d)}_n\to R^{(d)}_{n-1}$ sets $x_n^{(1)}, \dots, x_n^{(d)}$ to zero, in the category of $\N^d$-graded rings.
Again, $\pi_n : R^{(d)} \to R^{(d)}_n$ denote the projections, and $f_n \coloneqq \pi_n(f)$.
For $v = (v_1,\dots, v_d)\in \N^d$ we set $|v| = v_1+\dots+v_d$, and we write $\mathbf{1} = e_1 + \dots + e_d$.

Let $I$ be the ideal of $R$ (resp.\ $R^{(d)}$) of elements with zero constant term.
\begin{theorem}[{\cite[Theorems~1.1 and~1.2]{erman_big_2019}}]\label{thm:ESS}
    Let $E \subseteq R$ be a set of homogeneous elements of positive degree whose images in $I/I^2$ form a $k$-basis.
    Then the $k$-algebra homomorphism $k[Y_e \mid e \in E] \to R$ sending $Y_e$ to $e$ is an isomorphism.
    In particular, $R$ is a polynomial ring.
\end{theorem}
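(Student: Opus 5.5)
We prove surjectivity and injectivity of $\phi\colon k[Y_e \mid e\in E]\to R$, $Y_e\mapsto e$, treating $R$ and $R^{(d)}$ in parallel. Throughout we use that each graded piece $R_d$ (resp.\ $R^{(d)}_v$) is the inverse limit of the finite-dimensional spaces $(R_n)_d$. We also use that, degree by degree, only finitely many elements of $E$ of a given degree matter modulo decomposables. The latter holds only after truncation, so some care is needed.

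\emph{Surjectivity.} We argue by induction on degree. Let $f\in R_d$. The images of the elements $e\in E$ with $\deg e=d$ span $(I/I^2)_d=R_d/(I^2)_d$. Two points need attention. First, $(I^2)_d=\sum_{0<a<d}R_aR_{d-a}$ is a finite sum, since the grading is positive and each product space is the image of the product map. Second, "basis" here must be read in the topological sense: $E_d$ can be infinite, and a finite combination of basis elements need not exhaust $R_d/(I^2)_d$. So I would interpret the hypothesis as follows: $(I/I^2)_d$ is isomorphic, as a linearly compact space, to a product $k^{E_d}$. Then $f\equiv\sum_e c_e e \pmod{I^2}$ is a possibly infinite but convergent sum, and the remainder lies in $\sum R_aR_{d-a}$, where induction applies to each factor.

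This forces the main obstacle. Since $k[Y_e]$ only allows finite polynomials, genuine surjectivity requires that $E_d$ be finite or that the theorem be read with the appropriate completion. I would follow Erman--Sam--Snowden and reduce to the key fact that $R_d/(I^2)_d$ is spanned by finitely many elements modulo decomposables after all. Failing that, I would replace the polynomial ring by the graded ring whose degree-$d$ part is the inverse limit, and use that the induced map is continuous.

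\emph{Injectivity.} This is the real content: elements of $E$ are algebraically independent, in particular they have no polynomial relations. I would prove it by contradiction, using strength. Suppose $P(e_1,\dots,e_m)=0$ is a nonzero relation among finitely many elements, with $P$ of minimal degree. The key lemma to prove first is that any finite set of homogeneous elements that is linearly independent modulo $I^2$ has \emph{infinite collective strength}. This means that no nonzero linear combination of equal-degree elements lies in $I^2$, which is equivalent to the statement that no such combination is a finite sum $\sum g_ih_i$. Next, project to $R_n$ for large $n$ and apply Ananyan--Hochster: for collective strength exceeding a bound depending only on the degrees, the truncations $(e_1)_n,\dots,(e_m)_n$ form a regular sequence whose products behave like independent variables, and in particular they are algebraically independent. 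We need that the collective strength of the truncations tends to infinity with $n$. For that, suppose the truncations had bounded strength for all $n$. Then \Cref{lemma:belonging}, applied to the polynomial transformation $\sigma_{\underline a}$ for some fixed tuple (via pigeonhole over the finitely many tuples), would give a bounded-strength decomposition in $R$, contradicting the key lemma. Since $\pi_n$ is a ring homomorphism, $P((e_1)_n,\dots)=0$, which contradicts the algebraic independence of the truncations. The multigraded case runs identically with partition rank and the maps $\alpha_{I_1,\dots,I_r}$. I expect the compactness step via \Cref{lemma:belonging}, together with correctly handling infinitely many generators per degree, to be the main difficulty.
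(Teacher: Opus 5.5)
The paper does not prove this statement; it quotes it from Erman--Sam--Snowden. Judged on its own, your argument does not establish it as written. \textbf{Surjectivity.} The obstacle you describe comes from misreading the hypothesis. ``Form a $k$-basis'' means a Hamel basis. Since $E$ is homogeneous and $I/I^2$ is graded, the images of $E_d$ form a basis of $(I/I^2)_d=R_d/(I^2)_d$. So every $f\in R_d$ equals $\sum c_e e$ with only finitely many $c_e\neq 0$, plus a finite sum $\sum g_ih_i$ of products of homogeneous elements of degrees in $(0,d)$. Induction on $d$ then finishes. This is just graded Nakayama, and it needs no finiteness of $E_d$.

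The ``key fact'' you propose to reduce to is false. Already $R_1=k^{\mathbb N}$ has uncountable dimension. In degree $2$, the quadrics $\sum_{i\ge 1}x_{mi+j}^2$ for $0\le j<m$ are independent modulo $I^2$ for every $m$, by the argument of \Cref{ex:variables}. The topological reading is also untenable. For $d\ge 2$ every $f\in R_d$ is the limit of its truncations $f_n\in (I^2)_d$, so $(I^2)_d$ is dense and the quotient topology is indiscrete. In degree $1$ your reading would make the statement false: $\{x_1,x_2,\dots\}$ is a topological basis of $R_1$, but $x_1+x_2+\cdots$ is not a polynomial in the $x_i$. Replacing $k[Y_e]$ by a completion would prove a different theorem.

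\textbf{Injectivity.} The architecture is sound. Independence modulo $I^2$ is by definition infinite collective strength in $R$. If the truncations have collective strength tending to infinity, Ananyan--Hochster makes $(e_1)_n,\dots,(e_m)_n$ a homogeneous regular sequence, hence algebraically independent. Applying the ring map $\pi_n$ to a relation then gives a contradiction.

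The compactness step, however, does not follow from \Cref{lemma:belonging} as you invoke it. Bounded collective strength gives, for each $n$, a nonzero coefficient vector $\lambda^{(n)}$ \emph{depending on $n$} with $\str\bigl(\sum_i \lambda^{(n)}_i (e_i)_n\bigr)\le r$. Pigeonholing over the tuples $\underline a$ does not fix $\lambda$, and \Cref{lemma:belonging} concerns a single fixed point of $P_\infty$. The coefficients must be moved into the source of the transformation.

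Here is one way to do it. A decomposition at level $n$ projects to one at every level $n'\le n$ with the same $\lambda$. So you can pigeonhole a degree $d$ (and keep only the $e_i$ of that degree), an index $i_0$, and a tuple $\underline a$ that work at every level with $\lambda_{i_0}=1$. Then apply \Cref{lemma:belonging} at the point $(e_1,\dots,e_m,0)$ to the transformation $(S^0)^{\oplus(m-1)}\oplus (S^d)^{\oplus m}\oplus\bigoplus_j (S^{a_j}\oplus S^{d-a_j})\to (S^d)^{\oplus m}\oplus S^d$ given by $(\lambda,u,g,h)\mapsto \bigl(u,\,u_{i_0}+\sum_{i\neq i_0}\lambda_i u_i-\sum_j g_jh_j\bigr)$. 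This produces a nontrivial linear combination of the $e_i$ lying in $I^2$, which is the needed contradiction. With this repair, and the standard fact that a homogeneous regular sequence of positive-degree forms is algebraically independent, the injectivity half goes through.
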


\begin{lemma}\label{lem:R_inf}
    The ring $R^{(d)}$, graded by total degree $|\cdot|$, is a polynomial ring in homogeneous elements.
\end{lemma}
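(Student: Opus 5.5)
We will deduce \Cref{lem:R_inf} from \Cref{thm:ESS}, either by applying it verbatim to a suitable identification or by rerunning its proof in the multigraded setting. The most direct route is to view $R^{(d)}$ as a graded subring of the singly graded inverse limit $R$ in the variables $x_i^{(j)}$. Enumerate these variables as $z_1, z_2, \dots$; interleaving is possible because each $\pi_n$ kills finitely many variables. Then $R^{(d)}$ is the subring of $R$ spanned by the multihomogeneous pieces of each total degree.

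Concretely, $R^{(d)} = \bigoplus_{v\in\N^d} R^{(d)}_v$, where $R^{(d)}_v = \varprojlim_n (R^{(d)}_n)_v$ is the space of formal sums of monomials of multidegree $v$. Each $R^{(d)}_v$ involves only monomials of total degree $|v|$. Moreover, for fixed total degree there are finitely many $v$, so the total-degree-$m$ part $\bigoplus_{|v|=m} R^{(d)}_v$ consists exactly of the formal sums of degree-$m$ monomials in the $z$'s. Hence, with the total grading, $R^{(d)}$ is literally the graded inverse limit of the standard-graded polynomial rings $k[z_1,\dots,z_N]$.

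One must check that the truncation systems agree. Setting $x_n^{(1)},\dots,x_n^{(d)}$ to zero kills $d$ variables at once, whereas the inverse limit $R$ kills one variable at a time. But the subsequence $N = dn$ of a cofinal system gives the same inverse limit. Concretely, a compatible family $(f_{dn})_n$ determines a compatible family $(f_N)_N$ by truncation. This shows that $R^{(d)}$ with total grading is isomorphic, as a graded ring, to the ring $R$ of \Cref{thm:ESS}.

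Now choose $E$ consisting of multihomogeneous elements. The quotient $I/I^2$ is multigraded, since $I$ and $I^2$ are multihomogeneous ideals. A $k$-basis of $I/I^2$ can therefore be chosen among images of multihomogeneous elements: pick a basis of each $(I/I^2)_v$ and lift it. Applying \Cref{thm:ESS} then gives $R^{(d)} \cong k[Y_e\mid e\in E]$, with each $Y_e$ homogeneous for the total degree, and in fact multihomogeneous.

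The main obstacle is the identification of $R^{(d)}$ with $R$. In degree $m$ the graded inverse limit must equal the direct sum over $|v|=m$ of the multigraded inverse limits. This holds because the sum is finite and the inverse limit commutes with finite direct sums. Everything else is a formal application of \Cref{thm:ESS}.
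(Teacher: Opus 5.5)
Your proposal is correct and follows essentially the paper's argument: order the variables so that $(R^{(d)}_n)_n$ is a cofinal subsystem of the standard-graded system defining $R$, use the finiteness of $\{v : |v| = t\}$ to identify the total-degree-$t$ piece with $\bigoplus_{|v|=t} R^{(d)}_v$, and then apply \Cref{thm:ESS}. Your extra step of choosing $E$ multihomogeneous is not needed for this lemma itself, but it is exactly what the paper does afterwards in the proof of \Cref{lemma:GLgenisvariables}.
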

\begin{proof}
    As a graded ring, $R^{(d)}$ can be described by describing each graded piece: for each $v\in \N^d$, $R^{(d)}_v$ is the set of formal $k$-linear combinations of monomials of degree $v$ in the variables $\{x_i^{(j)}\}$, with the maps $R^{(d)}_v\to (R_n^{(d)})_v$ given by the natural projection.

    Let $S$ denote the inverse limit $\varprojlim_n R_n^{(d)}$ in the category of $\N$-graded rings, where we set $\deg(x_i^{(j)}) = 1$.
    Ordering the variables as $x_1^{(1)}, \dots, x_1^{(d)}, x_2^{(1)}, \dots, x_2^{(d)}, \dots$, the system $(R_n^{(d)})_n$ is a cofinal subsystem of the system of standard-graded polynomial rings defining $R$, so $S \cong R$.
    For any $t\in \N$ the vector space $S_t$ is the set of formal $k$-linear combinations of monomials of degree $t$ in the $\{x_i^{(j)}\}$; see \cite[\S1.1]{erman_big_2019}.
    As the set of vectors $v\in \N^d$ with $|v|=t$ is finite, it follows that
    \[
    S_t \cong \bigoplus_{v\in \N^d, |v|=t}R^{(d)}_v,
    \]
    hence $S\cong R^{(d)}$ as graded rings, and we conclude by \Cref{thm:ESS}.
\end{proof}

Under the identifications above, $S^a_\infty = R_a$ and, for $J\subseteq [d]$, $(T^J)_\infty = R^{(d)}_{e_J}$ where $e_J \coloneqq \sum_{j \in J} e_j$.
In particular, the inverse limit of the $d$-th tensor power is $R^{(d)}_{\mathbf{1}}$.
Since the projections $\pi_n$ are ring homomorphisms, the map $\sigma_{\underline{a},\infty}$ sends $(g_1, h_1, \dots, g_r, h_r) \in \bigoplus_i (R_{a_i} \oplus R_{d-a_i})$ to the product $g_1h_1 + \dots + g_rh_r$ computed in $R$, and similarly for the transformations $\alpha_{I_1,\dots,I_r}$ of \Cref{section:partitionrank} and $R^{(d)}$.
This motivates the following definitions.

\begin{defn}\label{defn:str-prk-in-subrings}
    Let $\mathcal{B}$ be either $R$ or $R^{(d)}$, or a graded subring of one of them.
    \begin{enumerate}
        \item If $\mathcal{B} \subseteq R$ and $f \in \mathcal{B}_d$, the \textit{strength} $\str_{\mathcal{B}}(f)$ of $f$ in $\mathcal{B}$ is the minimal $r$ such that $f = g_1h_1 + \dots + g_rh_r$ with $g_i,h_i\in \mathcal{B}$ homogeneous of positive degree, and $\str_{\mathcal{B}}(f) = \infty$ if no such $r$ exists.
        \item If $\mathcal{B} \subseteq R^{(d)}$, a \textit{tensor} in $\mathcal{B}$ is a homogeneous element of nonzero degree in $\{0,1\}^d$, and for $f \in \mathcal{B}_{\mathbf{1}}$ the \textit{partition rank} $\prk_{\mathcal{B}}(f)$ of $f$ in $\mathcal{B}$ is the minimal $r$ such that $f = A_1B_1 + \dots + A_rB_r$ with $A_i,B_i$ tensors in $\mathcal{B}$ (necessarily of complementary degrees), and $\prk_{\mathcal{B}}(f) = \infty$ if no such $r$ exists.
    \end{enumerate}
    For $\mathcal{B} = R_n$ and $\mathcal{B} = R_n^{(d)}$ these are the usual notions, and we omit the subscript $\mathcal{B}$ when no confusion can arise.
\end{defn}

Taking homogeneous components of a finite sum of products of elements of $I$, one sees that a homogeneous element $f\in R_d$ lies in $I^2$ if and only if $\str_R(f) < \infty$.
Similarly, a homogeneous element of $R^{(d)}$ of degree $v \in \{0,1\}^d$ lies in $I^2$ if and only if it is a finite sum of products of tensors of complementary degrees.

\begin{defn}
    A tuple $(f_1, \dots, f_s)$ of homogeneous forms in $R$ has \textit{infinite collective strength} if any nontrivial linear combination of the $f_i$ of the same degree has infinite strength in $R$.
\end{defn}

By the discussion above, $(f_1, \dots, f_s)$ has infinite collective strength if and only if the images of $f_1, \dots, f_s$ in $I/I^2$ are linearly independent.
Elements of this kind play a structural role.

\begin{defn}
    Homogeneous elements $f_1, \dots, f_s$ of $I \subseteq R$ (resp.\ $I \subseteq R^{(d)}$) are \textit{part of a system of variables} if their images in $I/I^2$ are linearly independent.
    In $R^{(d)}$, homogeneous means homogeneous for the $\N^d$-grading.
\end{defn}

The terminology is justified by the following lemma: elements that are part of a system of variables can be treated as independent variables, and all other variables can be set to zero.

\begin{lemma}\label{lemma:GLgenisvariables}
    Let $f_1, \dots, f_s \in R$ (resp.\ $R^{(d)}$) be homogeneous elements that are part of a system of variables, and let $\mathcal{A} \coloneqq k[f_1, \dots, f_s]$.
    Then $f_1, \dots, f_s$ are algebraically independent, and there exists a homomorphism of graded (resp.\ $\N^d$-graded) $k$-algebras $\pi : R \to \mathcal{A}$ (resp.\ $\pi : R^{(d)} \to \mathcal{A}$) that restricts to the identity on $\mathcal{A}$.
\end{lemma}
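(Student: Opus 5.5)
The plan is to extend $f_1,\dots,f_s$ to a full system of variables and then apply \Cref{thm:ESS}. The images $\bar f_1,\dots,\bar f_s$ in $I/I^2$ are linearly independent by hypothesis. The space $I/I^2$ is graded: by $\N$ for $R$, and by $\N^d$ for $R^{(d)}$, since $I^2$ is a homogeneous ideal. So we extend $\bar f_1,\dots,\bar f_s$ to a homogeneous $k$-basis of $I/I^2$. Concretely, in each graded piece $(I/I^2)_v$ we extend the independent set $\{\bar f_i : \deg f_i = v\}$ to a basis, using Zorn's lemma or the existence of bases in arbitrary vector spaces. We then lift each added basis vector to a homogeneous element of $I$ of the same degree, and let $E$ be the union of $\{f_1,\dots,f_s\}$ with these lifts. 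The elements of $E$ are homogeneous of positive degree, and their images form a $k$-basis of $I/I^2$.

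Next we apply \Cref{thm:ESS} to $E$. In the multigraded case we first pass through \Cref{lem:R_inf}: under $R^{(d)}\cong S\cong R$ with total-degree grading, $\N^d$-homogeneous elements are homogeneous for the total degree. The ideal of elements with zero constant term is the same in both descriptions, so the hypothesis of \Cref{thm:ESS} holds. The theorem gives an isomorphism $k[Y_e \mid e\in E]\to R$ (resp.\ $R^{(d)}$) with $Y_e\mapsto e$. Since $f_1,\dots,f_s\in E$ are distinct, being linearly independent modulo $I^2$, they are images of distinct variables $Y_{f_i}$ under an injective map. Hence they are algebraically independent, and $\mathcal{A}=k[f_1,\dots,f_s]$ is a polynomial ring on them.

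To build $\pi$, we define a $k$-algebra map on $k[Y_e]$ by $Y_{f_i}\mapsto f_i$ and $Y_e\mapsto 0$ for $e\notin\{f_1,\dots,f_s\}$. We compose it with the inverse of the isomorphism from \Cref{thm:ESS}. Each $Y_e$ is sent to a homogeneous element of the same degree (or to $0$), so $\pi$ preserves the grading. In the multigraded case we give $Y_e$ the $\N^d$-degree of $e$; the isomorphism is then $\N^d$-graded, because it sends the monomial basis to products of $\N^d$-homogeneous elements. By construction $\pi(f_i)=f_i$, so $\pi$ restricts to the identity on $\mathcal{A}$.

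The main point needing care is the multigraded bookkeeping. We must check that the isomorphism of \Cref{thm:ESS}, stated for the $\N$-grading, respects the finer $\N^d$-grading once $E$ consists of $\N^d$-homogeneous elements. This holds because the map is determined on generators, and a polynomial ring with variables assigned degrees carries the induced grading. We also need that $R^{(d)}$ is the direct sum of its $\N^d$-graded pieces, which follows from \Cref{lem:R_inf}. Everything else is formal.
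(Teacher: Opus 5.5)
Your proposal is correct and follows essentially the same route as the paper: extend $f_1,\dots,f_s$ to a homogeneous set $E$ lifting a basis of $I/I^2$, use \Cref{lem:R_inf} and \Cref{thm:ESS} to identify the ring with the polynomial ring $k[E]$, and define $\pi$ by sending the elements of $E$ outside $\{f_1,\dots,f_s\}$ to zero. You also spell out two details the paper leaves implicit: that the isomorphism respects the finer $\N^d$-grading, and that Zorn's lemma is needed for the infinite-dimensional graded pieces.
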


\begin{proof}
    As $I/I^2$ is $\N^d$-graded, we can extend $f_1, \dots, f_s$ to a set $E$ of $\N^d$-homogeneous elements whose images in $I/I^2$ form a $k$-basis.
    These elements are also homogeneous for the total degree, so by \Cref{lem:R_inf} and \Cref{thm:ESS} the ring $R^{(d)}$ is the polynomial ring $k[E]$.
    Hence $f_1, \dots, f_s$ are algebraically independent, and sending $e \in E$ to itself if $e \in \{f_1, \dots, f_s\}$ and to $0$ otherwise defines an $\N^d$-graded homomorphism $\pi: R^{(d)} \to \mathcal{A}$ that restricts to the identity on $\mathcal{A}$.
    The case of $R$ is the same.
\end{proof}

Applying \Cref{lemma:GLgenisvariables} to a full basis shows that there exist index sets $\{I_v\}_{v\in \N^d}$ such that $R^{(d)}$ is isomorphic, as an $\N^d$-graded ring, to the polynomial ring $k[\{X_{v,i}: v\in \N^d, i\in I_v\}]$ where $\deg(X_{v,i}) = v$.
For this reason we can think of tensors as polynomials evaluated at some collection of ``algebraically independent'' tensors, and we use this identification extensively in the following sections.

\begin{example}\label{ex:variables}
    In degree one there is nothing to check: $(I^2)_1 = 0$ in $R$ and $(I^2)_{e_j} = 0$ in $R^{(d)}$, so linearly independent elements of these degrees are part of a system of variables.
    Elements in $S^d_\infty$ can have infinite strength even for $d \geq 2$.
    For example, the quadric $x_1^2 + x_2^2 + \cdots \in R_2$ does not lie in $I^2$: if it were equal to $\sum_{i = 1}^s \ell_i\ell'_i$ with $\ell_i, \ell'_i\in R_1$, then each truncation $x_1^2 + \cdots + x_n^2$ would have rank at most $2s$.
    In the same way, the infinite identity matrix $\sum_{m \geq 1} x_m^{(k)}x_m^{(l)} \in R^{(d)}_{e_k + e_l}$ with $k \neq l$ does not lie in $I^2$, as the elements of $(I^2)_{e_k+e_l}$ are matrices of finite rank.
\end{example}

\subsection{The main technique}

\begin{prop}\label{prop:transfer}
    Let $r \geq 0$.
    \begin{enumerate}
        \item If $f \in R_d$ satisfies $\str(f_n) \leq r$ for every $n$, then $\str_R(f) \leq r$.
        \item If $f \in R^{(d)}_{\mathbf{1}}$ satisfies $\prk(f_n) \leq r$ for every $n$, then $\prk_{R^{(d)}}(f) \leq r$.
    \end{enumerate}
\end{prop}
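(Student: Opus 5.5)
The plan is to reduce both parts to \Cref{lemma:belonging}. The main issue is that the hypothesis only says each truncation $f_n$ has a decomposition of length at most $r$. That decomposition may depend on $n$, both in the degree pattern $\underline{a}$ and in how many summands are nonzero. A single polynomial transformation has to be fixed before \Cref{lemma:belonging} can be applied.

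For (1), I would fix $r$ and consider the finitely many tuples $\underline{a}=(a_1,\dots,a_r)$ with $1\le a_i\le d-1$. Allowing zero summands, $\str(f_n)\le r$ means exactly that $f_n\in \Im(\sigma_{\underline{a},k^n})$ for at least one such $\underline{a}$. By pigeonhole, some fixed $\underline{a}$ works for infinitely many $n$.

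I then claim it works for every $n$. The images are compatible with the projections: if $f_m=\sigma_{\underline{a},k^m}(g,h)$ and $m\ge n$, then naturality of $\sigma_{\underline{a}}$ with respect to the projection $k^m\to k^n$ gives $f_n=\pi(f_m)=\sigma_{\underline{a},k^n}(\pi g,\pi h)$. So $f_n\in\Im(\sigma_{\underline{a},k^n})$ for all $n$. By \Cref{lemma:belonging}, $f=\sigma_{\underline{a},\infty}(g,h)$ for some $(g,h)\in\bigoplus_i(R_{a_i}\oplus R_{d-a_i})$.

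It remains to identify this with a decomposition in $R$. As noted just before \Cref{defn:str-prk-in-subrings}, $\sigma_{\underline{a},\infty}$ is computed as $\sum g_ih_i$ in $R$, because the $\pi_n$ are ring homomorphisms. Discarding the terms where $g_i$ or $h_i$ vanishes leaves at most $r$ products of homogeneous elements of positive degree. Hence $\str_R(f)\le r$.

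For (2) the argument is identical, with the partition-rank transformations $\alpha_{I_1,\dots,I_r}$ in place of $\sigma_{\underline{a}}$. These send a tuple $(A_i,B_i)\in\bigoplus_i T^{I_i}\oplus T^{[d]\setminus I_i}$ to $\sum_i A_iB_i$, for nonempty proper subsets $I_i\subsetneq[d]$. There are finitely many choices of $(I_1,\dots,I_r)$, so the same pigeonhole and naturality argument yields one choice with $f_n\in\Im(\alpha_{I_1,\dots,I_r,k^n})$ for all $n$. \Cref{lemma:belonging} then gives the preimage in $\prod_i R^{(d)}_{e_{I_i}}\times R^{(d)}_{e_{[d]\setminus I_i}}$. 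Reading off the products in $R^{(d)}$ and dropping zero summands gives $\prk_{R^{(d)}}(f)\le r$.

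The only nontrivial input is \Cref{lemma:belonging}, which is assumed. The step needing care is the pigeonhole reduction together with downward compatibility of images under projections. One must also check that allowing zero entries makes ``at most $r$'' correspond exactly to membership in the union of finitely many images.
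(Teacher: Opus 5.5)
Your proposal is correct and follows the paper's proof essentially verbatim. Both fix one of the finitely many degree patterns $\underline{a}$ (resp.\ $(I_1,\dots,I_r)$), propagate membership in the image downward via naturality under the projections, apply \Cref{lemma:belonging}, and read off the products in $R$ (resp.\ $R^{(d)}$); the only difference is that you spell out the pigeonhole step that the paper compresses into its ``hence''.
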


\begin{proof}
    We prove the first statement; the second is analogous, using the polynomial transformations $\alpha_{I_1, \dots, I_r}$ of \Cref{section:partitionrank} in place of the $\sigma_{\underline{a}}$.
    For every $n$, the form $f_n$ lies in the image of $\sigma_{\underline{a}, k^n}$ for one of the finitely many tuples $\underline{a}$.
    If $f_n$ lies in the image of $\sigma_{\underline{a}, k^n}$, then $f_m$ lies in the image of $\sigma_{\underline{a}, k^m}$ for every $m \leq n$, by naturality of $\sigma_{\underline{a}}$ with respect to the projection $k^n \to k^m$.
    Hence there is a single tuple $\underline{a}$ such that $f_n \in \Im(\sigma_{\underline{a}, k^n})$ for every $n$.
    By \Cref{lemma:belonging}, $f$ lies in the image of $\sigma_{\underline{a}, \infty}$, i.e., $f = g_1h_1 + \dots + g_rh_r$ with $g_i \in R_{a_i}$ and $h_i \in R_{d-a_i}$.
    Hence $\str_R(f) \leq r$.
\end{proof}

\begin{cor}\label{cor:main-technique}
    Let $f_1, \dots, f_s \in R$ (resp.\ $R^{(d)}$) be homogeneous elements that are part of a system of variables, and let $\mathcal{A} \coloneqq k[f_1, \dots, f_s]$.
    Let $f \in \mathcal{A}_d$ (resp.\ $f \in \mathcal{A}_{\mathbf{1}}$) with $\str_{\mathcal{A}}(f) > r$ (resp.\ $\prk_{\mathcal{A}}(f) > r$).
    Then there exists $n$ such that $\str(f_m) > r$ (resp.\ $\prk(f_m) > r$) for every $m\geq n$.
\end{cor}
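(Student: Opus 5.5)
We argue by contraposition, combining \Cref{lemma:GLgenisvariables} with \Cref{prop:transfer}. We treat the strength case; the partition rank case is identical, replacing $R$ by $R^{(d)}$, strength by partition rank, and graded homomorphisms by $\N^d$-graded ones.

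First, we need that the failure set is downward closed in $n$. For $m \leq n$ the projection $R_n \to R_m$ setting $x_{m+1},\dots,x_n$ to zero is a graded ring homomorphism sending $f_n$ to $f_m$. It therefore sends a decomposition $f_n = \sum g_ih_i$ into positive-degree forms to a decomposition of $f_m$ of the same length; any factors that become zero can simply be dropped. Hence $\str(f_m) \leq \str(f_n)$ for $m \leq n$, so $\str(f_n)$ is nondecreasing in $n$. Consequently, if $\str(f_m) > r$ for some single $m = n$, then it holds for all $m \geq n$. It thus suffices to show that $\str(f_n) > r$ for at least one $n$.

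Suppose instead that $\str(f_n) \leq r$ for every $n$. By \Cref{prop:transfer}, $\str_R(f) \leq r$, so
\[
f = g_1h_1 + \dots + g_rh_r
\]
with $g_i, h_i \in R$ homogeneous of positive degree. Let $\pi : R \to \mathcal{A}$ be the graded retraction from \Cref{lemma:GLgenisvariables}, which is the identity on $\mathcal{A}$. Since $f \in \mathcal{A}$, applying $\pi$ gives $f = \sum \pi(g_i)\pi(h_i)$. Because $\pi$ is graded, each $\pi(g_i)$ and $\pi(h_i)$ is homogeneous of the same positive degree, or zero; discarding the zero terms, we get $\str_{\mathcal{A}}(f) \leq r$. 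This contradicts the hypothesis.

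There is no real obstacle. The only points needing care are that $\pi$ preserves degrees, so positive-degree factors stay of positive degree (possibly zero, which only shortens the decomposition), and, in the tensor case, that $\pi$ is $\N^d$-graded, so tensors of degree in $\{0,1\}^d$ map to tensors of the same degree or to zero. Both are guaranteed by \Cref{lemma:GLgenisvariables}.
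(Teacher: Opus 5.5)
Your proof is correct and matches the paper's argument: the graded retraction $\pi\colon R\to\mathcal{A}$ of \Cref{lemma:GLgenisvariables} rules out a decomposition of length at most $r$ in $R$, \Cref{prop:transfer} then gives a single $n$ with $\str(f_n)>r$, and projecting along $R_m\to R_n$ extends this to all $m\geq n$. You only present these steps in a different order, establishing monotonicity first and then deriving the contradiction, which changes nothing of substance.
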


\begin{proof}
    We prove the statement for strength; the proof for partition rank is the same.
    Let $\pi : R \to \mathcal{A}$ be the retraction of \Cref{lemma:GLgenisvariables}.
    If $f = g_1h_1 + \dots + g_rh_r$ with $g_i,h_i \in R$ homogeneous of positive degree, then
    \[
    f = \pi(f) = \pi(g_1)\pi(h_1) + \dots + \pi(g_r)\pi(h_r),
    \]
    where each $\pi(g_i)$ is either zero or homogeneous of degree $\deg(g_i)$, and similarly for $\pi(h_i)$.
    Discarding the vanishing terms gives $\str_{\mathcal{A}}(f) \leq r$, a contradiction.
    Hence $\str_R(f) > r$, and \Cref{prop:transfer} gives some $n$ with $\str(f_n) > r$.
    Finally, if $\str(f_m) \leq r$ for some $m \geq n$, then applying the projection $R_m \to R_n$ to a decomposition of $f_m$ gives $\str(f_n) \leq r$, a contradiction.
\end{proof}

Concretely, $f_n = \pi_n(f)$ is obtained by substituting the truncations $(f_1)_n, \dots, (f_s)_n$ for $f_1, \dots, f_s$ in the polynomial expression of $f$.
Moreover, if $f$ is a limit of elements of $\mathcal{A}$ of strength (resp.\ partition rank) at most $r$, explicitly given by a polynomial family in a parameter $t$, then applying $\pi_n$ to this family shows that $f_n$ lies in the closure of the locus of strength (resp.\ partition rank) at most $r$.

\section{Strength}\label{section:strength}
\begin{lemma}\label{lemma:str3}
    The polynomial $H = x^2u + xyv + y^2q + p(p-yw)$ of \eqref{eq:defH} has strength 3 as an element of the ring $k[x, y, w, p, q, u, v]$ with $x,y,w$ variables of degree one, and $p,q,u,v$ variables of degree $2$. 
\end{lemma}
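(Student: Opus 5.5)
The upper bound comes from an explicit decomposition. Since $p(p-yw) = p^2 - ywp$, we have
\[
H = x\,(xu+yv) + y\,(yq - wp) + p\cdot p,
\]
which is a sum of three products of positive-degree forms, so $\str(H)\le 3$. The real content is the lower bound: we must show that $H = g_1h_1 + g_2h_2$ is impossible in $\mathcal{A} = k[x,y,w,p,q,u,v]$ with all $g_i,h_i$ homogeneous of positive degree. The possible degree pairs are $(1,3)$ and $(2,2)$, so there are three cases: two linear-times-cubic terms, one of each type, and two quadric-times-quadric terms.

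The main tool I would use is the singular locus. View $\mathcal{A}$ as the coordinate ring of $k^7$. If $H = g_1h_1+g_2h_2$, then every partial derivative of $H$ lies in the ideal $J = (g_1,h_1,g_2,h_2)$. Hence $V(J)$, which has every component of codimension at most $4$, is contained in $\operatorname{Sing}V(H)$. The partials are easy to compute:
\[
\partial_u H = x^2,\quad \partial_v H = xy,\quad \partial_q H = y^2,\quad \partial_p H = 2p - yw .
\]
These already force $x=y=p=0$, and all the other partials vanish there. So $\operatorname{Sing}V(H) = \{x=y=p=0\}$, and therefore $x,y,p \in \sqrt{J}$. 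This codimension-$3$ constraint does not immediately contradict codimension $\le 4$, so the rest of the argument is a degree-by-degree analysis using it.

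In the cases with a linear factor, say $g_1 = \ell \in \operatorname{span}(x,y,w)$, I would reduce modulo $\ell$. This gives a one-term identity $\bar H = \bar g_2\bar h_2$ in a quotient ring that is again a weighted polynomial ring. I would then show directly that $\bar H$ is irreducible, or at least has strength $\ge 2$, for every choice of $\ell$. The point is that setting any linear combination of $x,y,w$ to zero keeps $p^2$ together with at least two of the terms $x^2u$, $xyv$, $y^2q$ with independent quadratic variables. The obstruction is then visible by degree in $q,u,v$: these variables appear only linearly, with linearly independent coefficients $y^2, xy, x^2$, which a single product (or two products, when both terms are linear-times-cubic) cannot produce. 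The mixed case reduces the same way to a single $(2,2)$ product.

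The main obstacle is the $(2,2)+(2,2)$ case. Write each quadric as $g = a(x,y,w) + Q(p,q,u,v)$ with $a$ quadratic in $x,y,w$ and $Q$ linear in $p,q,u,v$. Since $H$ is linear in $u$, $v$ and $q$ with coefficients $x^2$, $xy$ and $y^2$, comparing the parts that are bilinear in $\{q,u,v\}$ and $\{x,y,w\}$-quadratics gives
\[
x^2u + xyv + y^2q \;=\; \sum_i \bigl(a_i Q_i' + a_i' Q_i\bigr)\big|_{q,u,v\text{-part}} .
\]
Meanwhile the pure $p,q,u,v$ part gives $p^2 = Q_1Q_1' + Q_2Q_2'$ with no $q,u,v$ contributions. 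I would first try to show that the $Q$'s span at most a $2$-dimensional space which must contain $p$. Then the three independent quadrics $x^2, xy, y^2$ have to be produced as coefficients of only a few of $q,u,v$, which should give a contradiction by counting dimensions. The radical condition $x,y,p\in\sqrt J$ is there to rule out the degenerate configurations this count misses.
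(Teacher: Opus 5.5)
Your overall plan is sound and mostly matches the paper. You use the same upper-bound decomposition. For the cases with a $(1,3)$ term you reduce modulo the linear factor $\ell$; the paper reduces modulo $(\ell,v,w)$, and reducing modulo $\ell$ alone also works. For the $(2,2)+(2,2)$ case you use the same splitting $g=a+Q$ with $a\in k[x,y,w]_2$ and $Q\in k[p,q,u,v]_1$, comparing the pure part $Q_1Q_1'+Q_2Q_2'=p^2$ with the mixed part. Your singular-locus computation is correct but never used. The counting argument has no degenerate configurations left over, so you can drop it.

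Two of your justifications need repair.

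\textbf{The linear cases.} Your reason for the irreducibility of $\bar H$ is false for every $\ell\in\operatorname{span}(x,y)$. Modulo such an $\ell$, the terms $x^2u,xyv,y^2q$ collapse to a single product $m^2L$ with $m$ linear and $L\in\operatorname{span}(q,u,v)$; for example, only $y^2q$ survives when $\ell=x$. So neither ``at least two terms survive'' nor ``$q,u,v$ occur linearly with independent coefficients'' gives an obstruction, since $y^2q$ is itself a product. What actually works is $p^2$:
\begin{itemize}
\item A linear factor $m$ of $\bar H$ is impossible, because $H \bmod (\ell,m)$ still contains $p^2$.
\item In a $(2,2)$ factorization, the $(p,q,u,v)$-linear parts of the two factors multiply to $p^2$. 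Hence both are multiples of $p$, and the product contains no monomial involving $q,u,v$. But $\bar H$ does contain such a monomial.
\end{itemize}

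\textbf{The $(2,2)+(2,2)$ case.} You claim that $W=\operatorname{span}(Q_1,Q_1',Q_2,Q_2')$ has dimension at most $2$ and contains $p$. This is true, but you leave it unproved, and it needs a small case analysis (whether $Q_1Q_1'$ or $Q_2Q_2'$ vanishes, and the fact that $p^2-Q_2Q_2'$ must have rank at most $2$). Once you have it, the count closes with no exceptions. The $(q,u,v)$-part of the mixed sum then lies in $k[x,y,w]_2\otimes L$ for a single linear form $L$. It therefore cannot equal the rank-$3$ tensor $x^2\otimes u+xy\otimes v+y^2\otimes q$.

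The paper avoids this lemma by running the argument in the other order. The element $x^2u+xyv+y^2q-ywp$ has rank $4$ in $k[x,y,w]_2\otimes k[p,q,u,v]_1$, so the mixed identity forces $u,v,q,p\in W$. Thus the four $Q$'s form a basis, and $Q_1Q_1'+Q_2Q_2'$ is a nondegenerate quadric in four variables, hence not equal to $p^2$.
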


\begin{proof}
Suppose $H = AF + BG$.
There are three cases for the degrees of $(A,F)$ and $(B,G)$:
\begin{enumerate}[(a)]
    \item \label{case1} $(1,3) $ and $(1,3)$;
    \item \label{case2} $(1, 3)$ and $ (2,2)$;
    \item \label{case3} $(2, 2)$ and $(2, 2)$.
\end{enumerate}
To rule out the cases~\ref{case1}~and~\ref{case2}, we argue that $H$ is irreducible mod $(A,v,w)$, which implies $H\neq AF+BG$. We first note that $H\equiv x^2u + y^2q+p^2 \mod(v,w)$. When $A\in (v,w)$ this is clear. When $A$ is nonzero mod $(v,w)$, after reducing mod $A$ and possibly changing variables the resulting form $x^2u+p^2 +(A,v,w)$ is irreducible.

To rule out the case~\ref{case3}, we argue as in \cite[Theorem 23, case d]{ballico-bik-oneto-ventura}. Write $A = A' + A''$ where $A' \in k[x,y,w]_2, A''\in k[p,q,u,v]_1$ and similarly for $B,F,G$. Expanding $H$ as a polynomial over $k[x,y,w]$ in the variables $p,q,u,v$, the identity $H = AF + BG$ yields the identities
\begin{align}
    A'F' + B'G' &= 0,\nonumber\\
    A'F'' + A''F' + B'G'' + B''G' &= x^2u+xyv + y^2q-ywp,\label{eq:H=AFBG_line-2}\\
    A''F'' + B''G'' &= p^2.\label{eq:H=AFBG_line-3}
\end{align}
\Cref{eq:H=AFBG_line-2} implies that $u,v,p,q$ are contained in the span of $A'',B'',F'',G''$, so $A'',B'',F'',G''$ are linearly independent linear forms in $k[p,q,u,v]_1$. Consequently, $A''F'' + B''G''$ is irreducible, contradicting \Cref{eq:H=AFBG_line-3}.
\end{proof}

We now use the inverse limit ring $R$ of \Cref{section:inverse-limits} to substitute forms in enough variables for the placeholders appearing in $H$, so as to obtain a form of strength $3$ that is a limit of forms of strength at most $2$.

\begin{theorem}\label{thm:str-2-not-closed}
    Consider the quadrics
    \[
    p \coloneqq \sum_{i\geq 1}x_{4i}^2,\quad q \coloneqq \sum_{i\geq 1}x_{4i+1}^2,\quad u \coloneqq \sum_{i\geq 1}x_{4i+2}^2,\quad v \coloneqq \sum_{i\geq 1}x_{4i+3}^2 \quad \in R_2,
    \]
    and let $H \in R_4$ be the form of Equation~\eqref{eq:defH} with $x = x_1$, $y = x_2$ and $w = x_3$.
    Then for $n$ sufficiently large the quartic form
    \[
    H_n = x_1^2u_n + x_1x_2v_n + x_2^2q_n + p_n(p_n-x_2x_3) \in k[x_1, \dots, x_n]
    \]
    has strength $3$ and lies in the closure of the locus of quartic forms of strength at most $2$.
\end{theorem}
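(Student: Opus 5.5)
The plan is to combine \Cref{lemma:str3} with \Cref{cor:main-technique}, and to read off the limit statement from the explicit one-parameter family in the introduction. The proof has three parts: checking the hypothesis on the placeholders, transferring the lower bound, and producing the upper bound together with the limit.

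\emph{Step 1: $x_1,x_2,x_3,p,q,u,v$ are part of a system of variables in $R$.} By \Cref{ex:variables}, the linear forms $x_1,x_2,x_3$ are automatically fine, since $(I^2)_1=0$. For the quadrics I must show that every nontrivial combination $\lambda_1 p+\lambda_2 q+\lambda_3 u+\lambda_4 v$ lies outside $I^2$. Such a combination is a diagonal quadric with infinitely many nonzero coefficients. If it were in $I^2$, it would equal $\sum_{i=1}^s \ell_i\ell_i'$, and then every truncation would have rank at most $2s$. But the truncations have unbounded rank, which is a contradiction.

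Applying \Cref{lemma:GLgenisvariables}, the subalgebra $\mathcal{A}=k[x_1,x_2,x_3,p,q,u,v]$ is a polynomial ring in these generators, with the gradings of \Cref{lemma:str3}. That lemma gives $\str_{\mathcal{A}}(H)=3>2$. By \Cref{cor:main-technique}, there is an $n_0$ such that $\str(H_m)>2$ for all $m\ge n_0$. Here $H_m=\pi_m(H)$ is exactly the displayed form, because $\pi_m$ is a ring homomorphism.

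\emph{Step 2: upper bound.} The expression $H_n=x_1(x_1u_n+x_2v_n)+x_2^2q_n+p_n(p_n-x_2x_3)$ is a sum of three products of positive-degree forms. Hence $\str(H_n)\le 3$, and combined with Step 1 this gives $\str(H_n)=3$ for $n\ge n_0$.

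\emph{Step 3: closure.} In $\mathcal{A}[t]$, set $F=xy+tp$ and $G=x^2+txw+t^2q$, and put
\[
E(t)=F\,(t^2v+tp-tyw-xy)+G\,(t^2u+y^2).
\]
I will verify that $E(t)=t^2H+t^3(\cdots)$; this is the routine calculation and the only real check in the argument. The $t^0$ terms are $-x^2y^2+x^2y^2=0$. The $t^1$ terms are $xyp-xy^2w-xyp+xy^2w=0$ (the $p\cdot(-xy)$ contribution from $F$ cancels $xyp$). The $t^2$ terms are $xyv+p^2-pyw+x^2u+y^2q=H$.

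It follows that for $t\neq 0$ the form $t^{-2}E(t)$ has strength at most $2$ in $\mathcal{A}$, and it tends to $H$ as $t\to 0$. Applying $\pi_n$ to this polynomial family shows that $H_n$ is a limit of quartics of strength at most $2$ in $k[x_1,\dots,x_n]$, so $H_n$ lies in the Zariski closure of that locus.

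The main obstacle was the lower bound, and it is already handled by \Cref{lemma:str3} together with the transfer machinery. The remaining work is the infinite-strength check in Step 1 and the coefficient verification in Step 3.
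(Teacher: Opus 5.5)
Your proposal is correct and follows the paper's proof essentially step for step. It uses the same check that $x_1,x_2,x_3,p,q,u,v$ are part of a system of variables, the same lower bound via \Cref{lemma:str3} and \Cref{cor:main-technique}, and the same one-parameter family, which indeed satisfies $F(t^2v+tp-tyw-xy)+G(t^2u+y^2)=t^2H+t^3(pv+xwu)+t^4qu$. The only difference is cosmetic: you group the upper bound as $x_1(x_1u_n+x_2v_n)+x_2^2q_n+p_n(p_n-x_2x_3)$ rather than the paper's $x(xu+yv)+y(yq-wp)+p\cdot p$, and both are sums of three products of positive-degree forms.
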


\begin{remark}\label{rem:high-strength}
    The explicit choice of forms in \Cref{thm:str-2-not-closed} is made for concreteness.
    In fact, there exists $N$ such that for any linearly independent linear forms $x, y, w$ and any quadrics $p, q, u, v$ of collective strength at least $N$, in any number of variables, the quartic $H$ of Equation~\eqref{eq:defH} has strength $3$.
    Indeed, by universality of high-strength tuples \cite{bik-danelon-draisma-eggermont:univ}, for $N$ large enough any such tuple can be sent by a linear substitution of the variables to the tuple $(x_1, x_2, x_3, p_n, q_n, u_n, v_n)$ of \Cref{thm:str-2-not-closed}, with $n$ as there.
    Linear substitutions do not increase strength, so $\str(H) \geq \str(H_n) = 3$, while the decomposition $H = x(xu + yv) + y(yq - wp) + p\cdot p$ gives $\str(H) \leq 3$.
\end{remark}

\begin{proof}[Proof of \Cref{thm:str-2-not-closed}]
    We first check that $x_1, x_2, x_3, p, q, u, v$ are part of a system of variables of $R$.
    As $I/I^2$ is graded, it suffices to check this separately in degrees one and two.
    In degree one there is nothing to prove.
    In degree two, a nontrivial linear combination $\lambda_1 p + \lambda_2 q + \lambda_3 u + \lambda_4 v$ has truncations that are diagonal quadrics of unbounded rank, so it does not lie in $I^2$ by the argument of \Cref{ex:variables}.

    By \Cref{lemma:GLgenisvariables}, the subring $\mathcal{A} \coloneqq k[x_1, x_2, x_3, p, q, u, v]$ of $R$ is a polynomial ring in these elements, graded by $\deg(x_i) = 1$ and $\deg(p) = \deg(q) = \deg(u) = \deg(v) = 2$.
    By \Cref{lemma:str3}, $\str_{\mathcal{A}}(H) = 3 > 2$, so \Cref{cor:main-technique} gives $\str(H_n) > 2$ for $n$ sufficiently large.
    On the other hand, the decomposition
    \[
    H = x(xu + yv) + y(yq - wp) + p\cdot p
    \]
    shows that $\str(H_n)\leq 3$ for every $n$.

    To see that $H_n$ is in the closure of the strength-at-most-$2$ locus, set
    \[
    F \coloneqq xy +tp,\qquad G\coloneqq x^2 + txw+t^2q
    \]
    in $\mathcal{A}[t]$. A direct computation shows
    \[
    F\left(t^2v + tp - tyw - xy\right) + G\left(t^2u + y^2\right) = t^2 H + t^3(pv + xwu) + t^4 qu.
    \]
    Applying $\pi_n$, we obtain that for every $t \in k\setminus\{0\}$ the form $H_n + t(p_nv_n + x_1x_3u_n) + t^2 q_nu_n$ has strength at most $2$.
    Hence
    \[
    H_n = \lim_{t\to 0}\frac{1}{t^2}\left(F_n\left(t^2v_n + tp_n - tx_2x_3 - x_1x_2\right) + G_n\left(t^2u_n + x_2^2\right)\right),
    \]
    where $F_n$ and $G_n$ are obtained by applying $\pi_n$ to $F$ and $G$, lies in the closure of the locus of quartic forms of strength at most $2$.
\end{proof}

Finally, we want to show that the locus of forms of strength at most $r$ is never closed for $r \geq 2$ provided that the forms have enough variables.

\begin{lemma}[Suspension for Strength]\label{lemma:str-suspension}
    Let $F$ be a form of degree $d$ in the degree-$1$ variables $x_1,\dots, x_n$. Let $y$ be a variable of degree 1 and let $z$ a variable of degree $d-1$. Then $\str(F+yz) = \str(F)+1$.  
\end{lemma}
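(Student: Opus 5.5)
Here $F+yz$ lives in the graded polynomial ring $k[x_1,\dots,x_n,y,z]$, with $\deg y = 1$ and $\deg z = d-1$. The bound $\str(F+yz)\le \str(F)+1$ is immediate: append the term $y\cdot z$ to an optimal decomposition of $F$. For the reverse inequality I argue by induction-free specialization. Suppose
\[
F+yz=\sum_{i=1}^{s} g_ih_i
\]
with $g_i,h_i$ homogeneous of positive degree. I want to produce a decomposition of $F$ of length $s-1$. The idea is to kill one summand by a graded substitution in $y,z$ that fixes $F$. Because $F$ involves only the $x$'s, any graded $k[x]$-algebra endomorphism $\phi$ with $\phi(x_i)=x_i$ satisfies
\[
\phi(F+yz)=F+\phi(y)\phi(z).
\]
So I need to choose $\phi(y)\in k[x,y,z]_1$ and $\phi(z)\in k[x,y,z]_{d-1}$ such that two things hold. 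First, after substituting, the relation $F=-\phi(y)\phi(z)+\sum_i \phi(g_i)\phi(h_i)$ should reduce to a decomposition with at most $s-1$ products. Second, the terms should remain of positive degree, which is automatic since $\phi$ is graded.

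The natural first try is to set $y=0$ and $z=0$, which gives $F=\sum \bar g_i\bar h_i$ with $s$ terms. That is not good enough, so some term must be absorbed. I would exploit the linear parts. Write each factor of degree $1$ as a linear form in $x_1,\dots,x_n,y$ (and in $z$ too if $d=2$). Suppose some factor, say $g_1$, is linear and involves $y$ with nonzero coefficient. Then choose $\phi(y)$ so that $\phi(g_1)=0$: solve $g_1=0$ for $y$ as a linear form in the $x$'s (and $z$ if $d=2$), and set $\phi(z)=z$. This gives
\[
F+\phi(y)z=\sum_{i\ge 2}\phi(g_i)\phi(h_i).
\]
Now set $z=0$, which fixes $F$ and the $x$'s. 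This yields $F=\sum_{i\ge2}(\cdot)(\cdot)$, so $\str(F)\le s-1$.

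If no linear factor involves $y$, set $y=0$ directly. The result is $F=\sum_i g_i|_{y=0}\,h_i|_{y=0}$, with $z$ still present, and I must show it can be shortened. Here I would use the coefficient of $y$ in the original identity. Differentiating in $y$ (or comparing the parts of $y$-degree exactly one) gives
\[
z=\sum_i\left(\partial_y g_i\, h_i+g_i\,\partial_y h_i\right)\Big|_{y=0}.
\]
No linear factor depends on $y$, so each $\partial_y$ that appears is applied to a factor of degree $\ge 2$, and $\partial_y g_i$ has positive degree. The right-hand side therefore lies in the ideal generated by the $g_i|_{y=0},h_i|_{y=0}$, and it has a $z$ coefficient. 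Hence some factor, say $g_1|_{y=0}$, has degree $d-1$ with $h_1$ linear, and its image is $cz+(\text{other})$ with $c\neq 0$. This step is the main obstacle: the argument has to be made careful when $z$ appears in several factors. Since $z$ is a variable of top degree $d-1$, it can appear only linearly, and only in factors of degree exactly $d-1$. So I would then substitute $z\mapsto$ the value making $g_1|_{y=0}=0$, i.e.\ solve for $z$, which fixes $F$. This kills the first term and gives $\str(F)\le s-1$.

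If instead $z$ appears only in factors whose partners are independent of it, I would take a generic linear combination to reduce to the previous case.
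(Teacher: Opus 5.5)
Your proof is correct, and it takes a different route from the paper. The paper treats $d=2$ separately via $\str(F)=\lceil \operatorname{rk}(M_F)/2\rceil$. For $d\ge 3$ it uses length-preserving rewritings such as $G_1H_1+G_iH_i=(G_1+\lambda G_i)(h_1+z)+G_i(H_i-\lambda h_1-\lambda z)$ to push every occurrence of $y$ and $z$ into a single summand $(g_1+y)(h_1+z)$, and then compares coefficients to force $g_1=h_1=0$. You instead apply a graded $k[x_1,\dots,x_n]$-algebra specialization $\psi$ with $\psi(y)\psi(z)=0$, which fixes $F$ and annihilates one factor. This is shorter, avoids the coefficient bookkeeping, and handles all $d\ge 2$ uniformly. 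The paper's route buys a normal form in exchange: every length-$r$ decomposition of $F+yz$ can be rewritten, without changing its length, as $yz+\sum_{i\ge 2}G_iH_i$ with $G_i,H_i\in k[x_1,\dots,x_n]$.

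Two simplifications are available. First, your second case never occurs. The monomial $yz$ on the right-hand side can only arise as a linear factor with nonzero $y$-coefficient times a factor with nonzero $z$-coefficient, so your first case always applies. Second, your second-case argument needs no case hypothesis at all. The relation $z\in(g_i|_{y=0},h_i|_{y=0})$ already forces some restricted factor to have the form $cz+q(x)$ with $c\neq 0$, since otherwise that ideal lies in $(x_1,\dots,x_n)$, which does not contain $z$. Then $y\mapsto 0$, $z\mapsto -q/c$ finishes the proof. The worry about $z$ occurring in several factors is not a real obstacle, because the substitution acts on all factors at once. The closing sentence about a generic linear combination can simply be deleted.
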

\begin{proof}
    The $d=2$ case is immediate by relating the strength of a form $F$ to the rank of the symmetric matrix $M_F$ corresponding to $F$.
    It is easy to check that $\operatorname{str}(F) = \left\lceil \frac{\operatorname{rk}(M_F)}{2} \right\rceil$.
    As $y$ and $z$ are independent variables from the $x_i$, we have $\operatorname{rk}(M_{F + yz}) = \operatorname{rk}(M_F) + 2$.
    Now the statement follows.
    
    Assume $d\geq 3$. 
    The inequality $\str(F+yz)\leq \str(F) + 1$ is immediate. For the reverse inequality, suppose we have
    \begin{equation}\label{eq:suspension-str-decomp}
    F + yz = G_1H_1 + \dots +G_rH_r. 
    \end{equation}
    As the variable $z$ must appear in the right-hand side, we must have (WLOG) $\deg(G_1)=1,\deg(H_1)=d-1$. Write $H_1 = h_1+bz$ with $h_1\in k[x_1,\dots, x_n,y]$, and (again WLOG) $b = 1$. For any $\lambda\in k$ and any $2\leq i\leq r$, we have 
    \[
    G_1H_1 + G_iH_i = (G_1 + \lambda G_i)(h_1 + z) + G_i(H_i - \lambda h_1 - \lambda z),
    \]
    so we may iteratively replace $G_1, (G_2,H_2),\dots, (G_r, H_r)$ until $G_2,\dots, H_r\in k[x_1,\dots, x_n, y]$. As $\deg(G_1) = 1 < d-1$, we have $G_1\in k[x_1,\dots, x_n,y]$; write $G_1 = g_1 + ay$ where $a\in k, g_1\in k[x_1,\dots, x_n]$. Equating the coefficients of $yz$ in both sides of \eqref{eq:suspension-str-decomp} allows us to conclude that $a=1$.
    
    A similar replacement, for $u\in k[x_1,\dots, x_n,y]$, allows us to write 
    \[
    G_1H_1 + G_iH_i =  G_1(H_1 + uH_i) + (G_i - uG_1)H_i,
    \]
    so we may again replace $H_1, (G_2,H_2),\dots, (G_r,H_r)$ until $G_2,\dots, H_r\in k[x_1,\dots, x_n]$. Equating the coefficients of $y^2,\dots, y^d$ in both sides of \eqref{eq:suspension-str-decomp} allows us to conclude that $h_1\in k[x_1,\dots, x_n, z]$. We may therefore rewrite
    \[
    F + yz = (g_1 + y)(h_1+z) + G_2H_2 + \dots + G_rH_r,
    \]
    where $g_1,h_1,G_i,H_i\in k[x_1,\dots, x_n]$. It follows that $g_1 = h_1 = 0$, hence $F = G_2H_2 + \dots + G_rH_r$; we conclude that $\str(F)\leq \str(F+yz)-1$.
\end{proof}

Combining \Cref{lemma:str-suspension} with the technique of \Cref{section:inverse-limits}, we obtain non-closedness for all $r \geq 2$.

\begin{cor}\label{cor:str-higher}
    For every $r \geq 2$ there exist an integer $n$ and a quartic form in $k[x_1, \dots, x_n]$ of strength $r+1$ that lies in the closure of the locus of quartic forms of strength at most $r$.
\end{cor}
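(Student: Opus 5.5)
We proceed by induction on $r$, starting from the base case $r=2$ supplied by \Cref{thm:str-2-not-closed}. It gives an $n_2$ and a quartic $H_{n_2}\in k[x_1,\dots,x_{n_2}]$ of strength $3$ lying in the closure of the strength-at-most-$2$ locus. More precisely, $H_{n_2}=\lim_{t\to0}\Phi(t)$, where $\Phi(t)$ has strength at most $2$ for every $t\neq 0$.

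For the inductive step, suppose $F\in k[x_1,\dots,x_n]_4$ has $\str(F)=r+1$ and $F=\lim_{t\to 0}\Phi(t)$ with $\str(\Phi(t))\le r$ for $t\neq 0$. We add new variables $x_{n+1},\dots,x_{m}$. Set $y\coloneqq x_{n+1}$ and let $z$ be a cubic in the remaining new variables that behaves like an independent degree-$3$ variable. Then
\[
F+yz=\lim_{t\to0}\bigl(\Phi(t)+yz\bigr),
\]
and each $\Phi(t)+yz$ has strength at most $r+1$. So $F+yz$ lies in the closure of the strength-at-most-$(r+1)$ locus, and it remains to show $\str(F+yz)=r+2$.

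\Cref{lemma:str-suspension} is stated with $z$ a genuine variable of degree $d-1$, so we must realise $z$ as an actual cubic form. For this we use the inverse-limit technique of \Cref{section:inverse-limits}. In $R$, take a cubic $z=\sum_{i\ge1}x_{n+1+i}^3$, or any cubic in the variables beyond $x_{n+1}$ of infinite strength. The elements $x_1,\dots,x_{n+1}$ together with $z$ are then part of a system of variables. In degree one this is automatic. In degree three we need that $z\notin I^2$, i.e.\ that the truncations of $z$ have unbounded strength; for the Fermat cubic this follows, for instance, because $\str$ bounds the dimension of the singular locus from below, which we would cite or argue briefly.

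By \Cref{lemma:GLgenisvariables}, $\mathcal A=k[x_1,\dots,x_{n+1},z]$ is a polynomial ring with $z$ of degree $3$. \Cref{lemma:str-suspension} then gives $\str_{\mathcal A}(F+yz)=\str(F)+1=r+2$. Here strength in $\mathcal A$ is the strength of $F+yz$ as a polynomial in the graded variables, which is exactly the setting of the lemma, and strength of $F$ over $k[x_1,\dots,x_n]$ inside $\mathcal A$ agrees with $\str(F)$. \Cref{cor:main-technique} then yields $m$ with $\str((F+yz)_m)>r+1$. The upper bound $\str\le r+2$ is clear, so $(F+yz)_m=F+x_{n+1}z_m$ has strength exactly $r+2$. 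It is in the closure of the strength-at-most-$(r+1)$ locus by the displayed limit, with $z$ replaced by $z_m$.

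The main obstacle is the point where a genuine variable $z$ must be traded for an actual form, that is, certifying that the chosen cubic is part of a system of variables. Alternatively, we could invoke universality of high-strength tuples as in \Cref{rem:high-strength} rather than computing strengths of Fermat cubics directly.
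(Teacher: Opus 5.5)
Your proposal is correct and follows the paper's proof essentially step by step. You induct from \Cref{thm:str-2-not-closed}, suspend by $x_{n+1}z$ with $z$ the infinite Fermat cubic, and certify $z\notin I^2$ through the singular-locus bound. You then apply \Cref{lemma:GLgenisvariables}, \Cref{lemma:str-suspension} in $\mathcal{A}$, and \Cref{cor:main-technique}, and get the closure statement by adding $x_{n+1}z_m$ to the approximating family, which is equivalent to the paper's continuity argument.
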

\begin{proof}
    We proceed by induction on $r$, the case $r = 2$ being \Cref{thm:str-2-not-closed}.
    Let $F \in k[x_1, \dots, x_n]$ be a quartic form of strength $r+1$ in the closure of the locus of quartics of strength at most $r$, and view $F$ as an element of $R$.
    Set $y \coloneqq x_{n+1}$ and $z \coloneqq \sum_{i \geq n+2} x_i^3 \in R_3$.
    We claim that $x_1, \dots, x_{n+1}, z$ are part of a system of variables, and only need to show $z\notin I^2$.
    If $z = \sum_{j=1}^s g_jh_j$ with $g_j, h_j \in R$ homogeneous of positive degree, then $\str(z_m) \leq s$ for every $m$.
    However, after extending scalars to an algebraic closure of $k$, the cubic $z_m = x_{n+2}^3 + \dots + x_m^3$ defines a smooth hypersurface in the variables $x_{n+2}, \dots, x_m$, while a cubic of strength at most $s$ has a singular locus of codimension at most $2s$; hence $\str(z_m)$ is unbounded.

    By \Cref{lemma:GLgenisvariables}, $\mathcal{A} \coloneqq k[x_1, \dots, x_{n+1}, z]$ is a polynomial ring with $\deg(z) = 3$, and \Cref{lemma:str-suspension} gives $\str_{\mathcal{A}}(F + yz) = r+2$.
    By \Cref{cor:main-technique}, there exists $m > n$ such that the quartic form $F + x_{n+1}z_m \in k[x_1, \dots, x_m]$ has strength greater than $r+1$, hence equal to $r+2$.
    Finally, the map $S^4(k^n) \to S^4(k^m)$ sending $g \mapsto g + x_{n+1}z_m$ is continuous and sends forms of strength at most $r$ to forms of strength at most $r+1$.
    As $F$ lies in the closure of the former set, $F + x_{n+1}z_m$ lies in the closure of the latter.
\end{proof}

\section{Partition Rank}\label{section:partitionrank}
Let $V$ be a vector space over the field $k$ of dimension $n$.
An element $T $ in the tensor power $ V^{\otimes d}$ can be interpreted as a multilinear form on the space $(V^*)^{\times d}$.
Let $R_n^{(d)} $ be the polynomial ring $k[\{x_{i}^{(j)}\}_{i\in [n],j\in [d]}]$ with $\mathbb{N}_{\geq 0}^d$-grading given by $\deg(x_i^{(j)}) = e_j$.
The \textit{degree} of a monomial in $R_n^{(d)}$ is the sum of the entries of its $\mathbb{N}_{\geq 0}^d$-grading.
We think of $x_i^{(j)}$ as the dual basis vector $x_i$ of the $i$-th basis vector $v_i$ of the $j$-th copy of $V$ decorated with the grading $e_j = (0, \dots, 0, 1, 0, \dots, 0)\in \mathbb{Z}^{d}$ with a $1$ in the $j$-th entry.

\begin{defn}
     A \textit{tensor} in $R_n^{(d)}$ is a homogeneous element of strictly positive degree with $\mathbb{N}_{\geq 0}^d$-grading in $\{0,1\}^d$.
\end{defn}

Let $J$ be a subset of $[d] = \{1,\dots, d\}$, let $e_J$ denote the vector $\sum_{j \in J}e_j$, and write $R_{e_J}^{(d)}$ for the submodule of $R_n^{(d)}$ of elements with $\mathbb{N}_{\geq 0}^d$-grading $e_J$.
We identify the space $V^{\otimes J}$ with the module $R_{e_J}^{(d)}$.

\begin{example}
    In $R_4^{(4)}$, the elements $x_1^{(1)} + x_2^{(1)}, x_1^{(1)}x_2^{(2)} + x_3^{(1)}x_4^{(2)}$, $x_1^{(1)}x_1^{(2)}x_1^{(3)}x_1^{(4)}$ are tensors, while $x_1^{(1)} + 1, x_1^{(1)} + x_1^{(2)}$, $(x_1^{(1)})^2 x_2^{(2)}$ and $x_1^{(1)}x_2^{(1)}x_3^{(1)}x_4^{(1)}$ are not.
\end{example}

\begin{lemma}\label{lemma:tensor-factor}
    Let $T\in R_n^{(d)}$ be a tensor such that $T = AB$ for some nonconstant elements $A, B$ of $R_n^{(d)}$.
    Then $A,B$ are tensors.
\end{lemma}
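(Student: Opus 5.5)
The plan is to use the fact that $R_n^{(d)}$ is an $\N^d$-graded polynomial ring, hence a domain in which products of homogeneous components behave predictably. We split the argument into two steps: first show that $A$ and $B$ are homogeneous, then show that their degrees lie in $\{0,1\}^d$ and are nonzero.

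For homogeneity, fix a monomial order on $\N^d$ compatible with addition, for instance lexicographic order. Write $A=\sum_{\alpha} A_\alpha$ and $B=\sum_\beta B_\beta$ as sums of nonzero $\N^d$-homogeneous components. Because $R_n^{(d)}$ is a domain, the product of the top components $A_{\alpha_{\max}}B_{\beta_{\max}}$ is nonzero and is the unique component of $AB$ in degree $\alpha_{\max}+\beta_{\max}$. The same holds for the bottom components $A_{\alpha_{\min}}B_{\beta_{\min}}$ in degree $\alpha_{\min}+\beta_{\min}$. Since $T$ is homogeneous, these two degrees coincide, so $\alpha_{\max}+\beta_{\max}=\alpha_{\min}+\beta_{\min}$. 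Together with $\alpha_{\max}\ge\alpha_{\min}$ and $\beta_{\max}\ge\beta_{\min}$, this forces $\alpha_{\max}=\alpha_{\min}$ and $\beta_{\max}=\beta_{\min}$. Hence $A$ and $B$ are homogeneous of degrees $a$ and $b$ with $a+b=\deg T\in\{0,1\}^d$.

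For the degree condition, note that $a,b\in\N^d$ and $a+b\in\{0,1\}^d$, so entrywise $a,b\in\{0,1\}^d$. It remains to show that neither degree is zero. The degree-$0$ part of $R_n^{(d)}$ is $k$, because every variable has nonzero degree $e_j$. So if $a=0$, then $A$ would be a constant, contradicting the assumption that $A$ is nonconstant; the same argument applies to $B$. Therefore $A$ and $B$ are homogeneous of nonzero degree in $\{0,1\}^d$, that is, tensors.

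The only step that needs care is the homogeneity argument. There we must check that the chosen order is a total order on $\N^d$ that is compatible with addition, so that extremal components multiply to the extremal component of the product. With that in place, the remaining steps are routine.
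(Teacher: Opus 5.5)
Your proof is correct and follows essentially the same route as the paper's one-line argument: factors of a homogeneous element in a domain are homogeneous, and a vector in $\{0,1\}^d$ can only split into summands in $\{0,1\}^d$. You add detail the paper omits, namely the extremal-component argument via an addition-compatible total order on $\N^d$ and the check that neither degree is zero because $(R_n^{(d)})_0 = k$.
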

\begin{proof}
    Factors of homogeneous elements are homogeneous; the only partitions of a vector in $\{0,1\}^d$ are into elements of $\{0,1\}^d$.
\end{proof}

\begin{defn}
    A tensor $T$ has \textit{partition rank} one if $T = AB$ for some nonconstant elements $A,B\in R_n^{(d)}$.
    The partition rank of a tensor $T$ is the minimal $r$ for which $T$ is a sum of $r$ partition-rank-one tensors, and we write $\prk(T) = r$.
\end{defn}

For nonempty proper subset $I\subseteq [d]$, we define the map
\[
\alpha_{I; V}: V^{\otimes I}\oplus V^{\otimes [d] \setminus I}\to V^{\otimes d} \text{ by } (A, B)\mapsto AB.
\]
For $r$ tuples of nonempty proper sets $I_1, \dots, I_r\subseteq[d]$, we similarly define
\[
\alpha_{I_1,\dots, I_r; V}: \bigoplus_{s=1}^r V^{\otimes I_s}\oplus V^{\otimes [d]\setminus I_s}\to V^{\otimes d} \text{ by } (A_1,B_1,\dots, A_r,B_r)\mapsto \alpha_{I_1}(A_1,B_1)+\dots +\alpha_{I_r}(A_r,B_r).
\]
Clearly, the union of the images of $\alpha_{I_1, \dots, I_r;V}$ over all choices of $I_1, \dots, I_r\subseteq[d]$ is the locus of tensors of partition rank at most $r$.

\subsection{Partition rank at most two is closed}

In this section we prove that the locus of tensors of partition rank at most $2$ is closed.
We need first a result from commutative algebra.
Let $k$ be a field. Let $R\coloneqq k(\!(t)\!)[x_i^{(j)}]$ with the $\Z^d$-grading $\deg(x_i^{(j)}) = e_j$ and let $R_0 = k\llbracket t\rrbracket[x_i^{(j)}] \subseteq R$.

\begin{lemma}\label{lemma:explicit-prk-2-calc}
    Suppose $A_1, A_2, B_1, B_2\in R$ are tensors such that $\widetilde{T}\coloneqq A_1B_1 + A_2B_2 \in R_0$ and $\deg(\widetilde{T}) = e_1+\dots+e_d$.
    Let $T$ denote the image of $\widetilde{T}$ in $R_0/tR_0$.
    Then, we have $\prk(T)\leq 2$.
\end{lemma}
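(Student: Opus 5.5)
The plan is to reduce to the statement sketched in the introduction: $T$ lies in the flat limit $I_0$ of the ideal $(A_1,A_2)$, and the part of $I_0$ in degrees $\le \mathbf{1}$ is contained in an ideal generated by two tensors.

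\textbf{Step 1: normalization.} Rescaling each of $A_1,B_1,A_2,B_2$ by a power of $t$, assume they lie in $R_0$ and have nonzero images $a_i,b_i$ in $R_0/tR_0$. Then write $\widetilde T = t^{m_1}A_1B_1 + t^{m_2}A_2B_2$ with $m_i\in\Z$.
\begin{itemize}
\item If $\min(m_1,m_2)\ge 0$, reducing mod $t$ exhibits $T$ as a sum of at most two products $a_ib_i$. These are tensors of complementary degrees by \Cref{lemma:tensor-factor}, so $\prk(T)\le 2$.
\item Otherwise $m_1=m_2=-b<0$ and $a_1b_1+a_2b_2=0$; this cancellation case is the real content.
\end{itemize}

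\textbf{Step 2: the common-factor case.} If $A_1$ and $A_2$ have a nonconstant common factor $g$ in $R$, then $\widetilde T = g\,C$ with $C$ a tensor by \Cref{lemma:tensor-factor}, after clearing powers of $t$. Then $T$ is a limit of partition-rank-one tensors, and $\prk(T)\le 1$ by closedness of the $\prk\le 1$ locus. This closedness can be proved directly the same way: the limit of $t^{-m}AB$ with primitive $A,B$ is $ab$.

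So assume $A_1,A_2$ are coprime, and in fact replace the pair $(A_1,A_2)$ by whichever of the four pairs $(A_i,B_j)$ is coprime, if needed. Then $(A_1,A_2)R$ is a height-two complete intersection, and $\widetilde T\in J\coloneqq (A_1,A_2)R\cap R_0$. Since $R_0/J$ is $t$-torsion free, $T\in I_0\coloneqq (J+tR_0)/tR_0$, the flat (initial) limit.

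\textbf{Step 3: describing $I_0$ in low degree.} I will run a Gr\"obner-style reduction on the generators $A_1,A_2$, in the spirit of Goto--Heinzer--Kim.
\begin{itemize}
\item Elements of $J$ not visible mod $t$ arise from relations $A_1F+A_2G\in tR_0$ with $a_1f+a_2g=0$ mod $t$.
\item If $a_1,a_2$ are coprime, then $f=a_2h$ and $g=-a_1h$. Since all degrees involved are $\le\mathbf 1$ componentwise, this forces $\deg a_1+\deg a_2\le \mathbf 1$, so $a_1$ and $a_2$ involve disjoint factor sets.
\item We may then subtract $h$ times the Koszul relation $A_2\cdot A_1 - A_1\cdot A_2 = 0$. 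This lifts $(f,g)$ to a relation divisible by $t$ and produces no new low-degree element.
\item If $a_1,a_2$ share a factor $c$, write $a_i=c\,a_i'$ with $a_1',a_2'$ coprime. Replace $A_2$ by $A_2'\coloneqq t^{-1}(\lambda A_2 - \mu A_1)$ for suitable tensors $\lambda,\mu$ lifting $a_1',a_2'$. The multidegree of $A_2'$ stays in $\{0,1\}^d$, because $\deg a_1' + \deg A_2 = \deg a_2' + \deg A_1 \le \mathbf 1$ in every case that matters for degree $\mathbf 1$.
\end{itemize}
The new pair still generates $J$ in degrees $\le\mathbf 1$ and still consists of tensors. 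The procedure terminates because each step strictly decreases either the valuation excess $b$ or the multidegree of the generator being modified.

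\textbf{Step 4: conclusion.} At termination, $I_0$ agrees in degrees $\le\mathbf 1$ with the ideal $(\alpha_1,\alpha_2)$ generated by two tensors. Hence $T=\alpha_1\beta_1+\alpha_2\beta_2$ with $\beta_i$ homogeneous of complementary degree, and these are tensors since $\deg T=\mathbf 1$. This gives $\prk(T)\le 2$.

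The main obstacle is Step 3. One must check that every modified generator remains homogeneous with multidegree in $\{0,1\}^d$ and that the replacement does not lose elements of $J$ in degree $\mathbf 1$. The delicate point is the non-coprime case, where the common factor $c$ can have high degree. My first attempt there is to exploit the rigidity of multidegrees in $\{0,1\}^d$: if $c$ has degree $e_K$, then all subsequent multipliers live in degrees disjoint from $K$. I would then argue by induction on $|K^c|$, reducing to a statement about tensors in fewer factors.
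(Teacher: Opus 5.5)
\textbf{There is a genuine gap in Step 3, in the non-coprime case you flagged.}

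The replacement $A_2\mapsto A_2'=t^{-1}(\lambda A_2-\mu A_1)$ throws $A_2$ away: the new ideal contains $\lambda A_2$, but in general not $A_2$. Three things go wrong as a result. The new pair does not generate $J$ in degrees $\le\mathbf 1$. The multidegree of the modified generator increases ($\deg A_2'=\deg A_2+\deg a_1'$), so your termination measure fails. And the terminal claim of Step 4 is false in general.

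Here is an example. Take $d=4$ and write $u_i,v_i,w_i,z_i$ for $x_i^{(1)},x_i^{(2)},x_i^{(3)},x_i^{(4)}$. Set
\[
A_1=u_1v_1+t\,u_2v_3,\quad B_1=-t^{-1}w_1z_1+w_2z_2,\quad A_2=u_1w_1+t\,u_3w_2,\quad B_2=t^{-1}v_1z_1+v_2z_2 .
\]
Then $T=u_1(v_1w_2+v_2w_1)z_2+z_1\sigma^*$ with $\sigma^*=u_3v_1w_2-u_2v_3w_1$.

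\begin{itemize}
\item $A_1,A_2$ are irreducible of different degrees, hence coprime, so Step 2 does not apply.
\item The reductions are $a_1=u_1v_1$ and $a_2=u_1w_1$, which share the factor $u_1$.
\item Your step with $\lambda=v_1$, $\mu=w_1$ gives $A_2'=\sigma^*$. This is coprime to $u_1v_1$, so you stop at $(u_1v_1,\sigma^*)$. Replacing $A_1$ instead stops at $(u_1w_1,\sigma^*)$.
\item Neither ideal contains $T$. The monomial $u_1v_2w_1z_2$ (resp.\ $u_1v_1w_2z_2$) occurs in no degree-$\mathbf 1$ element $u_1v_1q+\sigma^*\ell$ (resp.\ $u_1w_1q+\sigma^*\ell$).
\end{itemize}

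In fact no two-generated ideal agrees with $I_0$ in degrees $\le\mathbf 1$. In every degree $v\le\mathbf 1$ with $v\not\geq e_1+e_2$ and $v\not\geq e_1+e_3$, $I_0$ is zero. In degrees $e_1+e_2$ and $e_1+e_3$ it is spanned by $u_1v_1$ and $u_1w_1$ respectively. So the two generators would have to be $u_1v_1$ and $u_1w_1$, and $(u_1v_1,u_1w_1)$ misses $\sigma^*\in I_0$.

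\textbf{The missing idea, which is what the paper does, is to keep all three elements and then enlarge the ideal rather than replace a generator.}

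Let $D=\gcd(a_1,a_2)$, and let $\sigma$ be the $t$-reduced $S$-polynomial built from $A_1(a_2/D)-A_2(a_1/D)$. The paper shows that in degrees $\le\mathbf 1$ the limit ideal is generated by $a_1,a_2,\sigma^*$. The key step is a degree count: every further Buchberger pair $S(A_i,\sigma)$ has degree $\deg\sigma+\deg A_i-\deg\gcd(a_i,\sigma^*)\not\le\mathbf 1$.

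It then passes to the two-generated ideal $(D,\sigma^*)\supseteq(a_1,a_2,\sigma^*)$. This ideal is strictly larger than $I_0$ in low degrees (here $D=u_1\notin I_0$). The proof splits on whether $D$ and $\sigma^*$ share a factor; if they do, that factor divides $T$ and $\prk(T)\le 1$. In the example this gives $T=u_1\cdot(v_1w_2+v_2w_1)z_2+z_1\cdot\sigma^*$.

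This last step needs $\deg\sigma^*<\mathbf 1$, so that the cofactor of $\sigma^*$ is a tensor rather than a scalar. If $\deg\sigma^*=\mathbf 1$, then $\deg B_1=\deg(a_2/D)$ and $\deg B_2=\deg(a_1/D)$ have disjoint supports. In that case your idea of switching to another pair of factors, one from each product, finishes directly with the pair $(B_1,B_2)$. After normalizing $t$-orders, $b_1,b_2$ are coprime, and your coprime case gives $T\in(b_1,b_2)$ with tensor cofactors.
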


Before proving the Lemma~\ref{lemma:explicit-prk-2-calc}, we use it in our proof of the main statement of this section.

\begin{theorem}\label{thm:prk_2_closed}
    Let $V$ be a finite-dimensional vector space over an algebraically closed field $k$.
    The locus of forms of partition rank at most 2 is a closed subset of $V^{\otimes d}$.
\end{theorem}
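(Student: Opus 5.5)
We deduce \Cref{thm:prk_2_closed} from \Cref{lemma:explicit-prk-2-calc} via the curve selection lemma. The locus $X_{\leq 2}$ of tensors of partition rank at most $2$ is the union, over the finitely many pairs $(I_1,I_2)$ of nonempty proper subsets of $[d]$, of the images of the polynomial maps $\alpha_{I_1,I_2;V}$. Each image is constructible by Chevalley's theorem, hence so is $X_{\leq 2}$. For a constructible set, the Zariski closure equals the closure in the curve topology. So it suffices to take $T\in\overline{X_{\leq 2}}$ and show $\prk(T)\leq 2$.

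First we choose one pair $(I_1,I_2)$ with $T\in\overline{\Im\alpha_{I_1,I_2;V}}$. Since the image is the image of an irreducible affine space, a standard valuative argument applies: the domain is irreducible and the closure of the image is the closure of the image of a dominant map onto it. This gives an arc $(A_1(t),B_1(t),A_2(t),B_2(t))$ with coordinates in $k(\!(t)\!)$, of the respective degrees $e_{I_s}$ and $e_{[d]\setminus I_s}$, whose image $\widetilde T=A_1B_1+A_2B_2$ lies in $V^{\otimes d}\otimes k\llbracket t\rrbracket$ and specializes to $T$ at $t=0$. Concretely, one can argue as follows. Any point of the closure of the image of a morphism $\phi:\mathbb{A}^N\to\mathbb{A}^M$ is the specialization of $\phi(\gamma(t))$ for some $\gamma\in k(\!(t)\!)^N$. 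Indeed, pick an irreducible curve $C$ in the closure through $T$ that meets the image densely, lift its generic point to a curve in $\mathbb{A}^N$ via a multisection, and then reparametrize $t\mapsto t^m$ to remove fractional exponents. The lifted components may have poles in $t$; this is exactly why \Cref{lemma:explicit-prk-2-calc} is phrased over $k(\!(t)\!)$ rather than $k\llbracket t\rrbracket$.

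Next we verify the hypotheses of \Cref{lemma:explicit-prk-2-calc}. The $A_s,B_s$ are tensors in $R=k(\!(t)\!)[x_i^{(j)}]$, possibly zero, in which case the statement is trivial. Also $\widetilde T\in R_0$ and $\deg\widetilde T=\mathbf 1$. The lemma then gives $\prk(T)\leq 2$, where $T$ is the reduction of $\widetilde T$ mod $t$. The only remaining point is that this partition rank is taken over $k$ in the variables $x_i^{(j)}$ with $i\leq\dim V$. This matches the ambient space, since $R_0/tR_0=k[x_i^{(j)}]$.

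The main obstacle is the arc-lifting step. We must guarantee that the lift can be taken with Laurent series coefficients, possibly after a base change $t\mapsto t^{1/m}$, while the limit stays in the closure. We would handle this with the valuative criterion: take a DVR dominating the local ring of $T$ on $C$ inside the function field of a component of $\phi^{-1}(C)$ surjecting onto $C$, and complete it. By Cohen's structure theorem over the algebraically closed field $k$ of characteristic zero, the completion is $k\llbracket s\rrbracket$. This yields $\gamma\in k(\!(s)\!)^N$ with $\phi(\gamma)\in k\llbracket s\rrbracket^M$ reducing to $T$. All the genuine algebra, namely controlling the limit ideal of a height-two complete intersection, is deferred to \Cref{lemma:explicit-prk-2-calc}.
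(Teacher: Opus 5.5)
Your proposal is correct and follows the paper's proof. Both reduce to a single pair $(I_1,I_2)$, produce a $k(\!(t)\!)$-arc whose image lies in $V^{\otimes d}\otimes k\llbracket t\rrbracket$ and specializes to $T$, and conclude with \Cref{lemma:explicit-prk-2-calc}.

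The only difference is how the arc is obtained: the paper cites Lehmkuhl's curve selection lemma, while you sketch it with a DVR. In that sketch, take the DVR on a curve dominating $C$ (your multisection), not on an arbitrary component of $\phi^{-1}(C)$. That way its residue field is $k$ and its completion really is $k\llbracket s\rrbracket$.
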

\begin{proof}

The union of the images of $\alpha_{I_1,I_2}$ over all pairs of nonempty proper subsets partitions $I_1,I_2\subseteq [d]$ cuts out the subset of tensors of partition rank at most $2$.
Let $T$ be a closed point in the closure of this union, and suppose that $T\in \overline{\text{im}(\alpha_{I_1,I_2})}$ for some $I_1, I_2$.
We want to show that $T$ has partition rank $2$, implying that $T$ sits in the image of $\alpha_{I_3, I_4}$ for some suitable $I_3, I_4 \subset [d]$.

By the version \cite[Proposition 1]{LEHMKUHL19891} of the curve selection lemma, there exists a curve $C\subseteq V^{\otimes I_1}\oplus V^{\otimes [d]\setminus I_1}\oplus V^{\otimes I_2}\oplus V^{\otimes [d]\setminus I_2}$ such that $T\in \overline{\alpha_{I_1,I_2}(C)}$. By \cite[Corollary (a)]{LEHMKUHL19891}, there exists
\[(A_1,B_1,A_2,B_2)\in \left(V^{\otimes I_1}\oplus V^{\otimes [d]\setminus I_1}\oplus V^{\otimes I_2}\oplus V^{\otimes [d]\setminus I_2}\right) \otimes_k k(\!(t)\!)\]
such that $\alpha_{I_1,I_2}(A_1,B_1,A_2,B_2) \in V^{\otimes d}\otimes_k k\llbracket t\rrbracket$ and $\alpha_{I_1,I_2}(A_1,B_1,A_2,B_2)|_{t=0} = T$.
\Cref{lemma:explicit-prk-2-calc} implies $\prk(T)\leq 2$, so that $T$ sits in the image of $\alpha_{I_3, I_4}$ for some suitable $I_3, I_4$.
\end{proof}

We now proceed to the proof of Lemma~\ref{lemma:explicit-prk-2-calc}.
For clarity, we single out Claims~\ref{claimone}~and~\ref{claim:2-gen-reg-seq} that we need in its proof.
For an element $f = \sum_{n\geq d} t^nf_n\in R_0$ with $f_d \neq 0$ denote by $f^*$ the leading coefficient $f_d$ of $f$.

\begin{claim}\label{claimone}
    Let $A_1, B_1, A_2, B_2$ be as in Lemma~\ref{lemma:explicit-prk-2-calc}.
    Then, without loss of generality, we may assume that either 
    
    (a) $\text{ord}_t(A_i) = 0$ and $A_1^*,A_2^*$ are $k$-linearly independent, or that
    
    (b) $\text{ord}_t(A_1) = 0$ and $A_2 = 0$.
 \end{claim}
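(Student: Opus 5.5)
The plan is to normalize the pair $(A_1,A_2)$ by operations that leave the sum $\widetilde T = A_1B_1+A_2B_2$ unchanged and keep all four factors tensors in $R$ of the same multidegrees. There are two kinds of operations.

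\emph{Rescaling by powers of $t$.} For $c\in\Z$ we replace $(A_i,B_i)$ by $(t^{-c}A_i, t^{c}B_i)$. This lets us assume $\text{ord}_t(A_i)=0$ for every $i$ with $A_i\neq 0$. Here $\text{ord}_t$ of an element of $R$ means the minimal $t$-adic order of its coefficients, and $A^*$ is the coefficient of $t^{\text{ord}_t(A)}$, which is nonzero in $k[x_i^{(j)}]$.

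\emph{Triangular changes when $\deg A_1=\deg A_2$.} If $I_1=I_2$, then for $\lambda\in k(\!(t)\!)$ we have
\[
A_1B_1+A_2B_2=(A_1+\lambda A_2)B_1+A_2(B_2-\lambda B_1).
\]
The new factors are still homogeneous of the same multidegrees, so they remain tensors, or become zero.

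First, if one of $A_1,A_2$ is zero, we relabel so that $A_2=0$ and rescale $A_1$ to order $0$. This is case (b). (If both vanish then $\widetilde T=0$ and the lemma is trivial; we can also absorb this by setting $A_1=1$ formally, or just treat it separately.) So assume both are nonzero and rescale both to order $0$.

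If $A_1^*$ and $A_2^*$ are $k$-linearly independent we are in case (a). Independence is automatic when $\deg A_1\neq\deg A_2$, since the $A_i^*$ are nonzero homogeneous elements of different multidegrees. So the only remaining situation is $I_1=I_2$ with $A_1^*=cA_2^*$ for some $c\in k^\times$.

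In that situation we replace $A_1$ by $A_1-cA_2$ and $B_2$ by $B_2+cB_1$. This strictly increases $\text{ord}_t(A_1)$, or makes $A_1=0$. If $A_1=0$ we are in case (b) after relabeling. Otherwise we rescale $A_1$ back to order $0$ and repeat.

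The main obstacle is termination: a priori the process could go on forever, with $\text{ord}_t$ rising at each step. To handle this, view $A_1,A_2$ as vectors in the finite-dimensional $k(\!(t)\!)$-space $W\otimes k(\!(t)\!)$, where $W=k[x]_{e_{I_1}}$ restricted to finitely many variables. Let $L$ be the $k(\!(t)\!)$-span of $A_1$ and $A_2$.

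If $\dim L=1$, then $A_1=\mu A_2$ for some $\mu\in k(\!(t)\!)$. Applying the change with $\lambda=-\mu$ gives $A_1=0$ in a single step, which is case (b).

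If $\dim L=2$, consider the $k[\![t]\!]$-lattice $L\cap (W\otimes k[\![t]\!])$. It is free of rank $2$, because $k[\![t]\!]$ is a DVR and the lattice is saturated. Choose a basis of it whose reductions modulo $t$ are $k$-linearly independent; saturation guarantees that such a basis exists. Write this basis as $A_1',A_2'$, with $(A_1',A_2')=(A_1,A_2)M$ for some $M\in GL_2(k(\!(t)\!))$. Then setting $(B_1',B_2')^T=M^{-1}(B_1,B_2)^T$ preserves the sum $\widetilde T$.

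This replaces the iteration by one base change, the Smith/elementary-divisor normal form over the DVR, and lands us in case (a) with $\text{ord}_t(A_i')=0$ and $A_1'^*,A_2'^*$ independent. Finally, each $B_i'$ is a $k(\!(t)\!)$-combination of $B_1,B_2$, which have the same multidegree, so the $B_i'$ are still tensors or zero. A vanishing $B_i'$ kills a summand, which again falls into case (b).
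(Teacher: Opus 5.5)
Your proof is correct. It differs from the paper's mainly in how termination is handled.

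\textbf{The paper's route.} The paper normalizes $\text{ord}_t(A_i)=0$ and then runs your triangular reduction as an open-ended iteration: $A_2\mapsto t^{-1}(A_2-\lambda A_1)$, $B_2\mapsto tB_2$, $B_1\mapsto B_1+\lambda B_2$. It does not decide in advance whether this stops. If it never stops, the paper concludes $\widetilde T\in\bigcap_{m>0}(A_1,t^m)=(A_1)$ and takes $A_2=0$. That equality is stated without justification, and making it precise takes some work: a Gauss-lemma step to see that the cofactor of $A_1$ lies in $R_0$, then a Krull-intersection argument in each multidegree over $k\llbracket t\rrbracket$.

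\textbf{Your route.} You split according to $\dim_{k(\!(t)\!)}\operatorname{span}(A_1,A_2)$.
In dimension one, you use that the lemma allows coefficients in $k(\!(t)\!)$ and eliminate $A_1$ in a single step.
In dimension two, you pass directly to a basis of the saturated lattice $L\cap(W\otimes k\llbracket t\rrbracket)$, whose reductions mod $t$ are automatically independent.

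\textbf{What each approach buys.} Your split makes termination a non-issue and avoids the intersection argument altogether. The paper's iteration is a step-by-step version of your base change, and it stops exactly when the span is two-dimensional. The cost is that you need the graded pieces to be finite-dimensional, which holds here since $V$ is finite-dimensional. You also make explicit that $A_1^*,A_2^*$ can only be dependent when $\deg A_1=\deg A_2$. The paper uses this tacitly to keep $A_2-\lambda A_1$ homogeneous.

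A minor point of phrasing: freeness of the lattice comes from $k\llbracket t\rrbracket$ being a PID. Saturation is what makes the reductions independent, and it does so for every basis, so no special choice of basis is needed.
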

 \begin{proof}
     First we replace the pairs $(A_i,B_i)$ by $(t^{-\text{ord}_t(A_i)}A_i, t^{\text{ord}_t(A_i)}B_i)$ so that $\text{ord}_t(A_i) = 0$.
     Moving forward, we fix $A_1$ and iteratively replace $(B_1,A_2,B_2)$ by a sequence $(B_{1,0},A_{2,0},B_{2,0}), \allowbreak (B_{1,1},A_{2,1},\allowbreak B_{2,1}), (B_{1,2},A_{2,2},B_{2,2}),\dots$, beginning with $(B_{1,0},A_{2,0},B_{2,0})=(B_1,A_2,B_2)$. Suppose $(B_{1,0},A_{2,0},\allowbreak B_{2,0}),\dots, (B_{1,m},A_{2,m},B_{2,m})$ have been defined and $A_1^*, A_{2,m}^*$ are $k$-linearly dependent with $A_{2,m}^* = \lambda A_1^*$; we set 
     \[(B_{1,m+1},A_{2,m+1},B_{2,m+1})\coloneqq (B_{1,m} + \lambda A_{1,m},  t^{-1}(A_{2,m}-\lambda A_1), tB_{2,m}).\]
The claim is proven as soon as $A_1^*, A_{2,m}^*$ are linearly independent for some $m$. If instead $A_1^*, A_{2,m}^*$ are linearly dependent for all $m>0$, then $\widetilde{T} \in \bigcap_{m>0}(A_1, t^m) = (A_1)$, so we may take $A_2 = 0$. 
\end{proof}

\begin{claim}\label{claim:2-gen-reg-seq}
    Suppose $\xi_1, \xi_2\in R_0$ are tensors that form a regular sequence. If $t^NT\in (\xi_1, \xi_2)R_0$ for some $N\geq 0$, then $\prk(T)\leq 2$. 
\end{claim}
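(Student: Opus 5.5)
The plan is to induct on $N$, removing one power of $t$ at a time, until we reach an identity $T = a\xi_1 + b\xi_2$ with $N=0$. Reducing that identity modulo $t$ then gives $\prk(T)\le 2$. Throughout I read $T$ as an element of $R_0$ of degree $\mathbf{1}=e_1+\dots+e_d$, the lift of the tensor in question; its reduction modulo $t$ is what we want to bound.

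\emph{Base case $N=0$.} Write $T=a\xi_1+b\xi_2$ and take the component of degree $\mathbf{1}$. Since $\xi_1,\xi_2$ are homogeneous of degrees in $\{0,1\}^d$, we may replace $a,b$ by their homogeneous components of the complementary degrees $\mathbf{1}-\deg\xi_i$. These are either constants, when $\deg\xi_i=\mathbf{1}$, or tensors. Reducing modulo $t$ writes $T|_{t=0}$ as a sum of at most two products of tensors of complementary degrees. In the degenerate case where a coefficient is constant, that term is itself a tensor of degree $\mathbf{1}$, which is of the form $\bar\xi_i$. So $\prk(T|_{t=0})\le 2$, with the caveat that $\prk$ of a nonzero scalar multiple of a tensor of degree $\mathbf{1}$ may need to be handled by regarding it as a single summand.

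\emph{Inductive step, coprime case.} Suppose $N\ge 1$ and $t^NT=a\xi_1+b\xi_2$. Reducing modulo $t$ gives $\bar a\bar\xi_1+\bar b\bar\xi_2=0$ in $k[x_i^{(j)}]$. If $\bar\xi_1,\bar\xi_2$ have no common factor, they form a regular sequence in the polynomial ring. The Koszul relation then gives $\bar a=c\bar\xi_2$ and $\bar b=-c\bar\xi_1$. Lifting $c$ to $R_0$, we get $a=c\xi_2+ta'$ and $b=-c\xi_1+tb'$. Hence $t^NT=t(a'\xi_1+b'\xi_2)$, and since $t$ is a nonzerodivisor, $t^{N-1}T\in(\xi_1,\xi_2)$. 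Induction concludes in this case.

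\emph{Inductive step, common-factor case (main obstacle).} This is the case where $\bar\xi_1=gh_1$ and $\bar\xi_2=gh_2$ with $g$ nonconstant and $h_1,h_2$ coprime. By the previous paragraph, if such a common factor never occurs we are done, so this is the only real difficulty.

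Here the relation gives $\bar a=ch_2$ and $\bar b=-ch_1$. Lift $h_1,h_2,g$ homogeneously to $R_0$. The element $h_2\xi_1-h_1\xi_2$ vanishes modulo $t$, so it equals $t\eta$ for some $\eta$. Then
\[
t^{N-1}T = c\eta + a'\xi_1+b'\xi_2 .
\]
This enlarges the ideal. My plan is instead to change the regular sequence and run a secondary descending induction.

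The proposed replacement uses the multidegrees. Since $\bar\xi_i$ are tensors, the supports of $\deg g$ and $\deg h_i$ are disjoint. Since $T$ has degree $\mathbf{1}$, only the degree-$\le\mathbf{1}$ parts of the ideal matter. I would try to show that the degree-$\le\mathbf{1}$ part of $(\xi_1,\xi_2):t^\infty$ is contained in an ideal $(\xi_1',\xi_2')$ generated by two tensors forming a regular sequence. One candidate is $(\xi_1,\eta)$ or $(\xi_2,\eta)$, chosen according to which of $h_1,h_2$ has degree overlapping $\deg\eta$. The invariant that should strictly decrease is the degree of the common factor $g$ of the reductions, or alternatively $\dim_k$ of a suitable graded quotient, so that the process terminates in the coprime case. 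Verifying that $\xi_1',\xi_2'$ remain tensors forming a regular sequence, and that $T$ still lies in $t^{-N'}(\xi_1',\xi_2')$, is the delicate bookkeeping I expect to occupy most of the proof. If this fails, I would fall back on tracking only the degree-$\mathbf{1}$ component of the relation. That component forces many of the problematic terms, those of degree exceeding $\{0,1\}^d$, to vanish.
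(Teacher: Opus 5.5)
Your coprime-case step is exactly the paper's argument. The paper fixes a representation $t^{-b}T=\xi_1\beta_1+\xi_2\beta_2$ with $b\le 0$ maximal. When $b<0$, it uses the Koszul syzygy on the reduction mod $t$ to make both coefficients divisible by $t$, contradicting maximality. It then contracts $(\xi_1,\xi_2)R_0$ to $S_0=k[x_i^{(j)}]$. Your induction on $N$ is the same step.

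The proposal is nevertheless not a complete proof, because you leave the common-factor case open. The missing observation is that this case never arises where the claim is used. The paper's proof treats $\xi_1,\xi_2$ as elements of $S_0$: it reduces mod $t$ without changing the $\xi_i$, and contracts to $(\xi_1,\xi_2)S_0$. In both applications, $(\xi_1,\xi_2)=(A_1^*,A_2^*)$ and $(D,\sigma^*)$, the $\xi_i$ lie in $S_0$, since they are leading coefficients or a gcd of them. Then $\bar\xi_i=\xi_i$, and two homogeneous elements forming a regular sequence share no nonconstant factor. So your coprime step is the whole argument, and you should state this hypothesis rather than attempt $t$-dependent $\xi_i$.

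If the $\xi_i$ were allowed to depend on $t$, the claim would already contain Lemma~\ref{lemma:explicit-prk-2-calc} whenever $A_1,A_2$ are coprime: take $\xi_i=A_i$ and clear the denominators of the $B_i$. Your sketch does not settle that case. The element $\eta=(h_2\xi_1-h_1\xi_2)/t$ has degree $\deg g+\deg h_1+\deg h_2$, which leaves $\{0,1\}^d$ as soon as $\deg h_1$ and $\deg h_2$ overlap. So $(\xi_1,\eta)$ need not be generated by tensors. Nothing in the sketch shows that the degree-$\le\mathbf 1$ part of $(\xi_1,\xi_2):t^\infty$ lies in an ideal generated by two tensors. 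Controlling that is exactly what the Gr\"obner-basis argument in the second case of the paper's proof of Lemma~\ref{lemma:explicit-prk-2-calc} is for.

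Separately, your handling of the degenerate base case is wrong. If $\deg\xi_1=\mathbf 1$, the coefficient of $\xi_1$ is a scalar, and $c\,\bar\xi_1$ is not a partition-rank-one summand; it cannot be ``regarded as a single summand''. The statement actually fails there. Take $\xi_1=\sum_{i=1}^3 x_i^{(1)}\cdots x_i^{(d)}$, which has partition rank $3$, and $\xi_2=x_4^{(1)}$. They form a regular sequence and $T=\xi_1\in(\xi_1,\xi_2)$, yet $\prk(T)=3$. The paper excludes this tacitly by requiring the coefficients $\beta_1,\beta_2$ to be tensors, i.e.\ of positive degree, which forces $\deg\xi_i<\mathbf 1$.

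With the two implicit hypotheses $\xi_i\in S_0$ and $\deg\xi_i<\mathbf 1$ made explicit, your base case and coprime step give a complete proof, the same as the paper's.
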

\begin{proof}
    As $T$ is homogeneous in the $\Z^d$-grading, there exist tensors $\beta_1, \beta_2\in R_0$ such that \begin{equation}\label{eq:b-rep}
    t^{-b}T = \xi_1\beta_1 + \xi_2\beta_2;\end{equation} we choose $b\leq 0$ maximal among all such representations. Write $\beta_i^{(0)}$ for the unique representative mod $t$ of $\beta_i$ in $S_0\coloneqq k[x_i^{(j)}]\subseteq R_0$.

    If $b < 0$, we have $\xi_1\beta_1^{(0)} + \xi_2\beta_2^{(0)}=0$; as $\xi_1,\xi_2$ form a regular sequence, we must have an equality of vectors $(\beta_1^{(0)}, \beta_2^{(0)}) = \gamma(-\xi_2, \xi_1)$ for some $\gamma\in R_0$. We may then replace $(\xi_1, \xi_2, \beta_1, \beta_2)$ by $(\xi_1, \xi_2, \beta_1+\lambda \xi_2, \beta_2-\lambda \xi_1)$. As $t\mid \beta_1 + \lambda \xi_2, \beta_2-\lambda \xi_1$, we divide through by $t$ to obtain $t^{-b-1}T\in (\xi_1, \xi_2)$, contradicting the assumed maximality of $b$. It follows that $b = 0$, so $T\in (\xi_1, \xi_2)R_0$. Contracting the ideal $(\xi_1, \xi_2)R_0$ down to $S_0$, we obtain $T\in (\xi_1, \xi_2)S_0$, hence $\prk(T) \leq 2$.
\end{proof}

\begin{proof}[Proof of Lemma~\ref{lemma:explicit-prk-2-calc}]
By Claim~\ref{claimone}, if $A_2 = 0$ we are done.
So, we can assume that $A_2 \neq 0$ and that $A_1^*, A_2^*$ are  $k$-linearly independent.
Let $b_i = \text{ord}_t(B_i)$ and $b = \min(b_1,b_2)$. If $b > 0$ then $T = 0$, so we may safely assume $b \leq 0$, hence we may treat $A_i, t^{-b}B_j$ as elements of $R_0$ and write
    \[
    t^{-b}T = \left(A_1(t^{-b}B_1) + A_2(t^{-b}B_2)\right)^*\in (A_1, A_2)^*,
    \]
    where $(A_1, A_2)^*$ denotes the ideal $\{f^*: f\in (A_1, A_2)\}$. We compute $(A_1, A_2)^*$ using Buchberger's algorithm. There are two main cases to consider. In the first case, $\codim(A_1^*, A_2^*) = 2$, in which case $(A_1, A_2)^* = (A_1^*, A_2^*).$ The intended claim $\prk(T)\leq 2$ follows from \Cref{claim:2-gen-reg-seq}.
    
     In the second case, $\codim(A_1^*, A_2^*) = 1$, in which case we let $D = \gcd(A_1^*, A_2^*)$. By \Cref{lemma:tensor-factor}, $D$ is a tensor and with $\delta\coloneqq \deg(D)$, we have $\delta\leq \min(\deg(A_1),\deg(A_2)).$ Put $\varepsilon_i = \deg(A_i) - \delta$ and $\mathbf{1} = e_1+\dots+e_d$. Computing $(A_1, A_2)^*$ using Buchberger's algorithm, we obtain generators $A_1^*, A_2^*, \sigma^*$, where $\sigma$ is chosen by maximizing $\mathrm{ord}_t(\sigma)$ among all those elements equivalent to $A_1(A^*_2/D) - A_2(A_1^*/D)$ mod $(A_1, A_2)$; we may without loss of generality assume $\sigma$ is homogeneous. We have $\deg(A_1^*) = \delta + \varepsilon_1, \deg(A_2^*) = \delta + \varepsilon_2, \deg(\sigma^*) = \delta+\varepsilon_1+\varepsilon_2$. By maximality of $\mathrm{ord}_t(\sigma)$, we have $\sigma^*\notin (A_1^*, A_2^*)$. 

     Let $G$ be a Groebner basis for $(A_1, A_2)$. We claim that all elements $g\in G\setminus\{A_1, A_2\}$ have $\deg(g)\geq \delta+\varepsilon_1+\varepsilon_2$, and for $g\neq \sigma$ we have $\deg(g)\not\leq \mathbf{1}$. We prove this claim by induction on the size of a partial Groebner basis $G'\subseteq G$ produced using Buchberger's algorithm. Assume we have $G' = \{A_1, A_2, \sigma, g_1,\dots, g_n\}$ and produce a new basis element by computing the $S$-polynomial $S(g, g')$ of two elements $g, g'$ of $G$. For any $1\leq i\leq n$, we have $\deg(S(g_i, \cdot)) \geq \deg(g_i) \not\leq \mathbf{1}$, so we need only consider the $S$-polynomials involving $A_1,A_2,\sigma$. Computing $S(A_1, A_2)$ and reducing mod $(A_1, A_2, \sigma)$ gives the zero polynomial, so it suffices to show $\deg(S(A_i, \sigma))\not\leq \mathbf{1}$. To see this, we compute 
     \begin{equation}\label{eqn:new-basis-element-degree}
          \deg(S(A_i, \sigma)) = \deg(\sigma) + [\deg(A_i)- \deg(\gcd(A_i^*, \sigma^*))].
     \end{equation}
    
     As $\sigma^*\notin (A_i^*)$, we have $0\leq \deg(\gcd(A_i^*, \sigma^*)) < \deg(A_i^*)$; let $j\in [d]$ such that $e_j\leq \deg(A_i^*)-\deg(\gcd(A_i^*, \sigma^*))$. Projecting onto the $e_j$-factor, we deduce that the quantity \eqref{eqn:new-basis-element-degree} is not bounded above by $\mathbf{1}$.

     The above argument shows that $t^{-b}T\in (A_1^*, A_2^*, \sigma^*)\subseteq (D, \sigma^*)$. If $\codim(D, \sigma^*) = 2$, \Cref{claim:2-gen-reg-seq} implies that $\prk(T)\leq 2$. Otherwise, $D, \sigma^*$ admit a common factor $f$, which must be a tensor in $S_0$. In this case, we have $T\in fR_0$, which implies $T\in fS_0$ and thus $\prk(T)\leq 1$.    
\end{proof}

\subsection{Partition rank is not closed in general}

Consider the $\mathbb{Z}_{\geq 0}^4$-graded polynomial ring $\mathcal{A} = k[x^{(1)}, \dots, x^{(4)}, y^{(1)}, \dots, y^{(4)}, p^{(1,2)},p^{(1,3)}, p^{(1,4)}, p^{(2,3)}, p^{(2,4)}, p^{(3,4)}]$ with $\deg(x^{(i)}) = \deg(y^{(i)}) = e_i$ and $\deg(p^{(i,j)}) = e_i + e_j$.
These variables are placeholders for, respectively, degree-1 and degree-2 tensors, and they can be seen as variables of the inverse-limit polynomial ring $R^{(4)}$ (see \Cref{lemma:GLgenisvariables}).
In particular, we look at nonconstant elements in $\mathcal{A}$ with $\mathbb{Z}_{\geq 0}^4$-grading in $\{0,1\}^4$ as tensors.

The candidate for showing that the partition rank is not a closed condition is the polynomial:
    \begin{equation}\label{eq:defT}
         \begin{split}
             T\coloneqq & \; p^{(3,4)}(x^{(1)}y^{(2)}-x^{(2)}y^{(1)}) + p^{(2,4)}(x^{(3)}y^{(1)}-x^{(1)}y^{(3)}) + p^{(1,4)}(x^{(2)}y^{(3)}-x^{(3)}y^{(2)}) + \\ & \; y^{(4)}(x^{(1)}p^{(2,3)} + x^{(2)}p^{(1,3)}+ x^{(3)}p^{(1,2)}) + x^{(4)}(y^{(1)}p^{(2,3)} + y^{(2)}p^{(1,3)} + y^{(3)}p^{(1,2)}).
         \end{split}
    \end{equation}

\begin{lemma}\label{lem:closure_of_3}
    The tensor $T$ defined in Equation~\ref{eq:defT} sits in the closure of the partition rank $\leq 3$ locus.
\end{lemma}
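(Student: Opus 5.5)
The plan is to verify directly the limit formula from the introduction. For $t\neq 0$ set
\[
A_{12}\coloneqq x^{(1)}y^{(2)}-x^{(2)}y^{(1)}+tp^{(1,2)},\quad A_{13}\coloneqq x^{(3)}y^{(1)}-x^{(1)}y^{(3)}+tp^{(1,3)},\quad A_{23}\coloneqq x^{(2)}y^{(3)}-x^{(3)}y^{(2)}+tp^{(2,3)}.
\]
Each $A_{ij}$ is homogeneous of degree $e_i+e_j$, hence a tensor in $\mathcal{A}$. Regrouping the expression in the limit by the factors $A_{12},A_{13},A_{23}$ exhibits a decomposition with three summands:
\[
T_t\coloneqq A_{12}\Bigl(p^{(3,4)}+\tfrac{1}{t}(y^{(4)}x^{(3)}+x^{(4)}y^{(3)})\Bigr)+A_{13}\Bigl(p^{(2,4)}+\tfrac{1}{t}(y^{(4)}x^{(2)}+x^{(4)}y^{(2)})\Bigr)+A_{23}\Bigl(p^{(1,4)}+\tfrac{1}{t}(y^{(4)}x^{(1)}+x^{(4)}y^{(1)})\Bigr).
\]
Each cofactor is homogeneous of the complementary degree ($e_3+e_4$, $e_2+e_4$, $e_1+e_4$ respectively). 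So, after matching the index conventions of the $A_{ij}$ with the $p$'s in \eqref{eq:defT} (the labels $A_{13}$ and $A_{23}$ may need to be swapped so that degrees match), $\prk(T_t)\le 3$ for every $t\neq0$.

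Next expand $T_t$ in powers of $t$. The $t^0$ part consists of two pieces. The first is $\sum (\text{antisymmetric } 2\times2 \text{ minor})\cdot p^{(\cdot,4)}$, which gives the first line of \eqref{eq:defT}. The second is $\sum p^{(i,j)}(y^{(4)}x^{(k)}+x^{(4)}y^{(k)})$ over $\{i,j,k\}=\{1,2,3\}$, which gives the second line. There are also $t^{1}$ terms $t\,p^{(i,j)}p^{(k,4)}$.

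The crucial point is that the $t^{-1}$ coefficient vanishes. It equals
\[
y^{(4)}\bigl(x^{(3)}m_{12}+x^{(2)}m_{13}+x^{(1)}m_{23}\bigr)+x^{(4)}\bigl(y^{(3)}m_{12}+y^{(2)}m_{13}+y^{(1)}m_{23}\bigr),
\]
where the $m$'s are the minors. Each bracket is a $3\times3$ determinant with a repeated row ($x,x,y$ or $y,x,y$), so it is zero. This cancellation is the only real check, and it is where the sign choices $m_{13}=x^{(3)}y^{(1)}-x^{(1)}y^{(3)}$ matter; I would expand one bracket explicitly to confirm it.

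Hence $T_t=T+t\,(\cdots)$ is a polynomial in $t$ with value $T$ at $t=0$, and $T=\lim_{t\to0}T_t$. This holds in $\mathcal{A}$, and therefore after applying $\pi_n$ for any realization of the placeholders as tensors in $R^{(4)}$ (or any concrete tensors). Since the map $t\mapsto \pi_n(T_t)$ is a polynomial curve whose points for $t\neq0$ lie in the partition rank $\le3$ locus, its value at $0$ lies in the Zariski closure of that locus, which proves the lemma.
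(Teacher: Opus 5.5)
Your proposal is correct and follows the paper's proof. You use the same $A_{12},A_{13},A_{23}$ and the same three-term grouping, and you arrive at the same identity showing that the expression equals $T+t\bigl(p^{(1,2)}p^{(3,4)}+p^{(1,3)}p^{(2,4)}+p^{(1,4)}p^{(2,3)}\bigr)$; your repeated-row determinant argument for the vanishing $t^{-1}$ coefficient spells out what the paper calls a ``direct computation''. No relabeling of $A_{13}$ and $A_{23}$ is needed: as defined, $A_{13}$ (degree $e_1+e_3$) already pairs with the cofactor of degree $e_2+e_4$, and $A_{23}$ with the one of degree $e_1+e_4$.
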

\begin{proof}
    Define the forms $A_{ij}\in \mathcal{A}[t]$ of degree $e_i + e_j$, with $\deg(t)=0$ as
    \[
A_{12}\coloneqq x^{(1)}y^{(2)} - x^{(2)}y^{(1)} + tp^{(1,2)},\quad A_{13}\coloneqq x^{(3)}y^{(1)} - x^{(1)}y^{(3)} + tp^{(1,3)}, \quad A_{23}\coloneqq x^{(2)}y^{(3)} - x^{(3)}y^{(2)} + tp^{(2,3)}.    
    \]
    Then we have
    \begin{align*}
    T= \lim_{t\to 0} \left (
    \begin{aligned}
    &A_{12}p^{(3,4)}+A_{13}p^{(2,4)}+A_{23}p^{(1,4)} \\+ &\frac{y^{(4)}}{t}(x^{(3)}A_{12}+x^{(2)}A_{13}+x^{(1)}A_{23}) + \frac{x^{(4)}}{t}(y^{(3)}A_{12}+y^{(2)}A_{13}+y^{(1)}A_{23})
    \end{aligned}
    \right ) ,
    \end{align*}
    so $T$ is a limit of tensors of partition rank at most $3$.
    Indeed, a direct computation shows that the expression in parentheses equals $T + t\left(p^{(1,2)}p^{(3,4)} + p^{(1,3)}p^{(2,4)} + p^{(1,4)}p^{(2,3)}\right)$.
\end{proof}

\begin{prop}\label{prop:prk4}
    The tensor $T$ of Equation~\ref{eq:defT} has partition rank equal to $4$ as an element of $\mathcal{A}$, i.e., $\prk_{\mathcal{A}}(T) = 4$.
\end{prop}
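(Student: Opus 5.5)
We prove the two inequalities separately. The upper bound is immediate from \eqref{eq:defT}: the first three summands are products of a variable $p^{(i,4)}$ with a tensor of degree $\mathbf{1}-e_i-e_4$. The last two summands are $y^{(4)}\cdot(\cdots)$ and $x^{(4)}\cdot(\cdots)$. This gives five terms; to reach four, we merge two of them. We regroup the $p^{(1,4)}$ term with the $x^{(1)},y^{(1)}$ parts of the last two summands, factoring through the degree-$e_1$ tensors $x^{(1)}$ and $y^{(1)}$:
\[
T = x^{(1)}\bigl(y^{(2)}p^{(3,4)} - y^{(3)}p^{(2,4)} + y^{(4)}p^{(2,3)}\bigr) + y^{(1)}\bigl(x^{(3)}p^{(2,4)} - x^{(2)}p^{(3,4)} + x^{(4)}p^{(2,3)}\bigr) + p^{(1,4)}(x^{(2)}y^{(3)}-x^{(3)}y^{(2)}) + (\text{terms with } p^{(1,2)},p^{(1,3)}).
\]
The remaining terms are $p^{(1,2)}(x^{(3)}y^{(4)}+x^{(4)}y^{(3)}) + p^{(1,3)}(x^{(2)}y^{(4)}+x^{(4)}y^{(2)})$. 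This is still five terms, so we will search (by hand or computer) for a grouping by $\{1,2\}|\{3,4\}$-type products that yields four; we expect one exists, since the paper asserts equality.

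For the lower bound, we suppose $T=\sum_{s=1}^3 A_sB_s$ with $A_s,B_s$ tensors in $\mathcal{A}$ of complementary degrees. We then case-split on the partition types $\{I_s,[4]\setminus I_s\}$, grouped as types $1|3$ and $2|2$.

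\textbf{Terms of type $2|2$.} We decompose each factor as in the proof of \Cref{lemma:str3}: $A=A'+A''$, where $A'$ is quadratic in the $x,y$ variables and $A''$ is linear in the $p$'s. Comparing the parts of $T$ that are bilinear in ($p$-linear, $xy$-quadratic) gives linear-algebra constraints. The $p$-parts of the factors must span all six $p^{(i,j)}$: each $p^{(i,j)}$ occurs in $T$ with a nonzero coefficient. Each $2|2$ term contributes at most two $p$-directions of complementary types, while a $1|3$ term $\ell\cdot C$ contributes $p$'s only inside $C$, multiplied by a linear form $\ell$.

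\textbf{Terms of type $1|3$.} Here we reduce modulo the linear factors, as in the strength argument. If $A_s=\ell$ has degree $e_i$, we quotient by $\ell$ and obtain a rank-$2$ decomposition of $T \bmod \ell$. We then show that $T \bmod \ell$ has partition rank at least $3$ for every nonzero linear $\ell$, each of degree $e_i$ with $i=1,\dots,4$. For $i=4$ this is plausible because $T\bmod (x^{(4)}-\lambda y^{(4)})$ retains all six $p$'s coupled through independent $xy$-minors. By symmetry in the indices $1,2,3$, the case $i=4$ and one case $i\in\{1,2,3\}$ suffice. For each, we substitute $\ell=0$ and set further variables to zero until we reach a tensor whose rank-$2$ impossibility follows from the $2|2$ analysis or from \Cref{thm:prk_2_closed}-type irreducibility considerations. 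With all three terms of type $2|2$, we argue as in case (c) of \Cref{lemma:str3}. The part of $T$ quadratic in $p$ is zero, so $\sum A_s''B_s''=0$. The linear part in $p$ forces the six $p$-directions to lie in the span of the $A_s'',B_s''$, which are three pairs of complementary type ($\{12|34\},\{13|24\},\{14|23\}$ in some order). Hence the $A_s'',B_s''$ are independent and $\sum A_s''B_s''\neq 0$, a contradiction.

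The main obstacle is the mixed cases, where one or two terms have type $1|3$. We would first try to kill a $1|3$ term by specializing: choose a linear substitution sending its degree-$e_i$ factor to zero while preserving enough structure in $T$. This reduces to showing that a specific smaller tensor has partition rank at least $3$, and then, recursively, at least $2$ after a second specialization. The final rank-one check is an irreducibility check. We expect this case analysis to need careful bookkeeping, possibly with a computer-algebra check of the reduced tensors.
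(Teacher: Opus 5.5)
Your all-$2|2$ case is correct and is the paper's case (a). Each $A_s'',B_s''$ is a multiple of a single $p^{(i,j)}$, all six must appear, and the resulting $p^{(i,j)}p^{(k,l)}$ coefficients cannot cancel. The rest of the proposal has two genuine gaps.

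\textbf{The upper bound is not established.} Both of your regroupings give five terms, and ``we expect one exists, since the paper asserts equality'' is not an argument. The fix is immediate. Every monomial of $T$ has the form $x^{(a)}y^{(b)}p^{(c,d)}$ with $\{a,b,c,d\}=\{1,2,3,4\}$. Collecting by the $x$-variable gives $T=\sum_{a=1}^4 x^{(a)}C_a$ with $\deg C_a=\mathbf{1}-e_a$, which is four $1|3$ products.

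\textbf{The mixed cases are only a plan.} These are the cases with at least one term of type $1|3$, and they are the heart of the lower bound. Your reduction is sound: if $T=\ell C+A_2B_2+A_3B_3$, then $\prk(T \bmod \ell)\le 2$ in $\mathcal{A}/(\ell)$. But the needed claim, $\prk(T\bmod \ell)\ge 3$ for every nonzero $\ell$ of every degree, is only called ``plausible.'' It is deferred to a recursion of specializations and possible computer checks that you never carry out. The appeal to the closedness theorem for partition rank $\le 2$ is also beside the point here.

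The missing idea is a rank observation that disposes of all mixed cases at once. View $T$ as linear in the $p$'s.
\begin{itemize}
\item The coefficient of $p^{(i,j)}$ is $\pm(x^{(k)}y^{(l)}\pm x^{(l)}y^{(k)})$, where $\{k,l\}$ is the complement of $\{i,j\}$. This is a bilinear form of rank $2$.
\item A $1|3$ term $\ell C$ with $\deg \ell=e_m$ affects only the three coefficients of $p^{(i,j)}$ with $m\notin\{i,j\}$. For each it contributes $\ell$ times a linear form, so rank at most $1$.
\item A $2|2$ term affects exactly two complementary $p$'s.
\end{itemize}
Hence each $p^{(i,j)}$ needs either a $2|2$ term or at least two $1|3$ terms. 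Weight each pair covered by a $2|2$ term by $2$ and each pair covered by a $1|3$ term by $1$. Then a $2|2$ term has total weight $4$ and a $1|3$ term has total weight $3$. The six pairs require total weight at least $12$, but three terms including any $1|3$ term give at most $11$.

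The same count also proves your reduced claim, so your route can be completed. In $\mathcal{A}/(\ell)$ exactly three coefficients drop to rank $1$ and stay nonzero, while the other three keep rank $2$. So one needs weight at least $9$, whereas two terms give at most $8$. The recursive specialization you propose is therefore unnecessary.
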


\begin{proof}
    We can rewrite the tensor $T$ as:
    \begin{equation}\label{eq:prk4}
    \begin{split}
        & x^{(1)} (y^{(2)} p^{(3,4)} - y^{(3)} p^{(2,4)}+ y^{(4)} p^{(2,3)}) + x^{(2)}(-y^{(1)} p^{(3,4)} + y^{(3)} p^{(1,4)}+ y^{(4)} p^{(1,3)}) + \\
        & x^{(3)}(y^{(1)} p^{(2,4)} - y^{(2)} p^{(1,4)} + y^{(4)} p^{(1,2)}) +  x^{(4)}(y^{(1)}p^{(2,3)} + y^{(2)}p^{(1,3)} + y^{(3)}p^{(1,2)}),
    \end{split}
    \end{equation}
    hence the partition rank is at most $4$; we show that $\prk(T) > 3$.
    
    Any $4$-tensor of partition rank 1 can be written as a product of a $3$-tensor and a $1$-tensor or else the product of two $2$-tensors (or in some cases both). Consequently, writing
    \[
    S\coloneqq A_1B_1 + A_2B_2 + A_3B_3,
    \]
    there are four cases for the partition types of the pairs $(A_\bullet, B_\bullet)$ respectively:
\begin{enumerate}[(a)]
        \item $(2,2),\quad (2,2),\quad  (2,2)$;
        \item $(1,3),\quad (2,2),\quad (2,2)$;
        \item $(1,3),\quad  (1,3),\quad (2,2)$;
        \item $(1,3),\quad  (1,3),\quad  (1,3)$.
    \end{enumerate}
We can rule out the case $(2,2) + (2,2) + (2,2)$. An arbitrary tensor of this form can be written as
\begin{equation*}
    \begin{split}
        S = & (\alpha^{(\lambda_1)} + \beta_{\lambda_1}p^{(\lambda_1)})(\alpha^{(\mu_1)}+\beta_{\mu_1}p^{(\mu_1)}) + (\alpha^{(\lambda_2)}+\beta_{\lambda_2}p^{(\lambda_2)})(\alpha^{(\mu_2)}+\beta_{\mu_2}p^{(\mu_2)}) + \\
           & (\alpha^{(\lambda_3)}+\beta_{\lambda_3}p^{(\lambda_3)})(\alpha^{(\mu_3)}+\beta_{\mu_3}p^{(\mu_3)}),
    \end{split}
\end{equation*}
        where each $\lambda_i,\mu_j$ is a partition of $\{1,2,3,4\}$ into equal-size parts and where $\alpha^{(i,j)} \in k[x^{(i)},y^{(i)}, \allowbreak x^{(j)},y^{(j)}]_{e_i + e_j}$ and $\beta_\bullet\in k$. As each $p^{(i,j)}$ occurs in $T$ with nonzero coefficient, each of the ${4\choose 2} = 6$ size-2 subsets of $\{1,2,3,4\}$ must occur among the $\lambda_\bullet, \mu_\bullet$, so the $\lambda_\bullet,\mu_\bullet$ are all distinct and the $\beta_\bullet$ are all nonzero. It follows that the $p^{(1,2)}p^{(3,4)}$ coefficient of $S$ is nonzero, hence $S\neq T$.

     The cases (b)-(d) can be ruled out using a numerical invariant of $S$. For each $i\neq j$, define 
     \[
     n_{ij}(A,B) = \begin{cases}
         2 &  \deg(B) = e_i+e_j\\
         1 & \deg(B)=e_i+e_j+e_k\\
         0 & \text{else}.
     \end{cases}
     \]
    Consider $T$ as a polynomial over $k[x^{(i)},y^{(i)}\mid i = 1,2,3,4]$. For each $\{i,j,k,l\}=\{1,2,3,4\}$, the coefficient on $p^{(i,j)}$ is $ \pm(x^{(k)}y^{(l)} \pm x^{(l)}y^{(k)})$, which does not factor as a product of linear forms. Consequently, in order for $S=T$, it must be the case that each $p^{(i,j)}$ occurs in a tensor $A_*B_*$ of the form $(2,2)$ or else $p^{(i,j)}$ occurs in \textit{at least two} tensors of the form $(1,3)$. In particular, if $S = T$ then
     \begin{equation}\label{eqn:coefficient-count-invariant}
     \sum_{1 \leq i < j \leq 4} \sum_{t=1}^3 n_{ij}(A_t,B_t) \geq \sum_{1\leq i < j \leq 4} 2 \geq 12.
     \end{equation}
     When $(A,B)$ is of type $(2,2)$, we have $\sum_{1\leq i<j\leq 4} n_{ij}(A,B) = 4$ and when $(A,B)$ is of type $(1,3)$, we have $\sum_{1\leq i<j\leq 4} n_{ij}(A,B) = 3$. For $S$ of types (b)-(d), we have
     \[
     \sum_{1 \leq i < j \leq 4} \sum_{t=1}^3 n_{ij}(A_t,B_t)= \sum_{t=1}^3 \sum_{1\leq i<j\leq 4} n_{ij}(A_t,B_t) \leq 11,
     \]
     contradicting \Cref{eqn:coefficient-count-invariant}, hence $S\neq T$ and thus $\prk(T) > 3$.
\end{proof}

\begin{theorem}\label{thm:prk_3_4}
    Let $x^{(i)} \coloneqq x_1^{(i)}$ and $y^{(i)} \coloneqq x_2^{(i)}$ in $R^{(4)}_{e_i}$ for $i = 1, \dots, 4$, let
    \[
    p^{(k,l)} \coloneqq \sum_{m \geq 1} x_m^{(k)}x_m^{(l)} \in R^{(4)}_{e_k+e_l} \qquad \text{for } 1 \leq k < l \leq 4,
    \]
    and let $T\in R^{(4)}_{\mathbf{1}}$ be the tensor of Equation~\eqref{eq:defT} evaluated at these elements.
    Then for $n$ sufficiently large the tensor $T_n \in R^{(4)}_n$, i.e., $T_n: (k^n)^{\times 4}\to k$, has partition rank $4$ and lies in the closure of the locus of tensors of partition rank at most $3$.
\end{theorem}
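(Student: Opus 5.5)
The proof will mirror the proof of \Cref{thm:str-2-not-closed} and has three steps.

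First, check that the chosen elements $x^{(i)}, y^{(i)}$ for $i=1,\dots,4$ and $p^{(k,l)}$ for $1\le k<l\le 4$ are part of a system of variables of $R^{(4)}$. Since $I/I^2$ is $\N^4$-graded, it suffices to check each multidegree separately.
\begin{itemize}
\item In degree $e_i$, the elements $x^{(i)}=x_1^{(i)}$ and $y^{(i)}=x_2^{(i)}$ are linearly independent, and by \Cref{ex:variables} there is nothing more to prove.
\item In degree $e_k+e_l$, only one element $p^{(k,l)}$ occurs. It is the infinite identity matrix, which does not lie in $I^2$ by \Cref{ex:variables}, because elements of $(I^2)_{e_k+e_l}$ are matrices of finite rank.
\end{itemize}
By \Cref{lemma:GLgenisvariables}, the subring they generate is then a polynomial ring, graded exactly as in the ring $\mathcal{A}$ of \Cref{prop:prk4}. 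We may therefore identify it with $\mathcal{A}$.

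Second, \Cref{prop:prk4} gives $\prk_{\mathcal{A}}(T)=4>3$. The partition-rank version of \Cref{cor:main-technique} then yields some $n_0$ with $\prk(T_n)>3$ for all $n\ge n_0$. This relies on \Cref{prop:transfer}(2), hence on \Cref{lemma:belonging} applied to the finitely many transformations $\alpha_{I_1,I_2,I_3}$. Conversely, applying the ring homomorphism $\pi_n$ to the decomposition \eqref{eq:prk4} writes $T_n$ as a sum of four products of tensors of complementary degrees. Each product is either zero or a partition-rank-one tensor, so $\prk(T_n)\le 4$, and hence $\prk(T_n)=4$.

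Third, apply $\pi_n$ to the identity in the proof of \Cref{lem:closure_of_3}. For every $t\neq 0$ this shows that
\[
T_n + t\bigl(p^{(1,2)}_np^{(3,4)}_n+p^{(1,3)}_np^{(2,4)}_n+p^{(1,4)}_np^{(2,3)}_n\bigr)
\]
equals a sum of three products $\pi_n(A_{12})\cdot(\ast)+\pi_n(A_{13})\cdot(\ast)+\pi_n(A_{23})\cdot(\ast)$, each with complementary degrees $(e_1+e_2, e_3+e_4)$ and so on. So this family has partition rank at most $3$. Letting $t\to 0$ shows that $T_n$ lies in the Zariski closure of the partition-rank-at-most-$3$ locus in $(k^n)^{\otimes 4}$, since a polynomial curve through a closed set has its limit in the closure.

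The only potentially delicate point is the first step: one must be careful that homogeneity is taken with respect to the $\N^4$-grading, so that the retraction of \Cref{lemma:GLgenisvariables} preserves multidegree and carries tensors to tensors or zero. That is exactly what the proof of \Cref{cor:main-technique} for partition rank needs; everything else is formal.
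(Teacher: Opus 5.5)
Your proposal is correct and follows the paper's own proof essentially step for step. You check degree by degree that $x^{(i)}, y^{(i)}, p^{(k,l)}$ are part of a system of variables, then get $\prk(T_n)=4$ by combining \Cref{prop:prk4} and \Cref{cor:main-technique} with $\pi_n$ applied to \eqref{eq:prk4}, and finally obtain the closure statement by pushing the identity of \Cref{lem:closure_of_3} through $\pi_n$.
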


\begin{proof}
    We first check that the elements $x^{(i)}, y^{(i)}, p^{(k,l)}$ are part of a system of variables of $R^{(4)}$.
    As $I/I^2$ is $\N^4$-graded and the degrees $e_k + e_l$ are pairwise distinct, it suffices to check that $x^{(i)}, y^{(i)}$ are linearly independent, which is clear, and that $p^{(k,l)}\notin I^2$, which is \Cref{ex:variables}.

    By \Cref{lemma:GLgenisvariables}, the subring $k[x^{(i)}, y^{(i)}, p^{(k,l)}]$ of $R^{(4)}$ is a polynomial ring in these elements, so it is identified with the ring $\mathcal{A}$ defined at the beginning of this section.
    By \Cref{prop:prk4}, $\prk_{\mathcal{A}}(T) = 4 > 3$, so \Cref{cor:main-technique} gives $\prk(T_n) > 3$ for $n$ sufficiently large.
    On the other hand, applying $\pi_n$ to Equation~\eqref{eq:prk4} shows $\prk(T_n) \leq 4$ for every $n$.
    Explicitly, $T_n$ is obtained by substituting $x_1^{(i)}$, $x_2^{(i)}$, and $p_n^{(k,l)} = \sum_{m=1}^n x_m^{(k)}x_m^{(l)}$ for $x^{(i)}$, $y^{(i)}$, and $p^{(k,l)}$ in Equation~\eqref{eq:defT}.

    Finally, applying $\pi_n$ to the identity in the proof of \Cref{lem:closure_of_3}, we obtain that for every $t\in k\setminus\{0\}$ the tensor $T_n + t\left(p_n^{(1,2)}p_n^{(3,4)} + p_n^{(1,3)}p_n^{(2,4)} + p_n^{(1,4)}p_n^{(2,3)}\right)$ has partition rank at most $3$, so $T_n$ lies in the closure of the locus of tensors of partition rank at most $3$.
\end{proof}

\begin{lemma}[Suspension for Partition Rank]\label{lemma:prk-suspension}
    Let $T\in R^{(d)}_n$ with $\deg(T) = \mathbf{1}.$ Let $S = R^{(d)}_n[y,z]$ with $\deg(y) = e_1$ and $\deg(z) = \mathbf{1}-e_1$. Then $\prk(T+yz) = \prk(T)+1$. 
\end{lemma}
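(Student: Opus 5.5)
The inequality $\prk(T+yz)\leq \prk(T)+1$ is immediate, since $yz$ is a product of tensors of complementary degrees $e_1$ and $\mathbf{1}-e_1$. For the reverse inequality I will mimic the proof of \Cref{lemma:str-suspension}: start from a decomposition
\[
T + yz = A_1B_1 + \dots + A_rB_r
\]
in $S$ with $A_i,B_i$ tensors, and modify it by elementary moves until one summand is exactly $yz$ and the remaining $r-1$ summands lie in $R^{(d)}_n$. If $d=1$ there is nothing to prove, since partition rank is not defined there. So I assume $d\geq 2$, in which case $y$ and $z$ have different degrees.

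\textbf{Step 1.} The monomial $yz$ must occur on the right. Since $z$ has degree $\mathbf{1}-e_1$, and every tensor has degree in $\{0,1\}^d$, any tensor containing $z$ has degree exactly $\mathbf{1}-e_1$ and is of the form $h+bz$ with $h\in R^{(d)}_n[y]$. Because $\deg(y)=e_1$ is not $\leq \mathbf{1}-e_1$, in fact $h\in R^{(d)}_n$. So after reordering, $A_1$ has degree $e_1$ and $B_1=h_1+z$ after rescaling. Here $A_1=g_1+ay$ with $g_1\in (R^{(d)}_n)_{e_1}$.

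\textbf{Step 2.} Remove $z$ from the other summands. Any other $B_i$ or $A_i$ containing $z$ also has degree $\mathbf{1}-e_1$, with partner of degree $e_1$. As in the strength case, I use
\[
A_1B_1+A_iB_i=(A_1+\lambda A_i)(h_1+z)+A_i(B_i-\lambda h_1-\lambda z).
\]
This keeps tensors in tensors, because degrees match. After these moves, $z$ appears only in $B_1$. Comparing the coefficients of $z$ then gives $A_1=y$, i.e.\ $g_1=0$ and $a=1$, because $z$ is a free variable and $T$ does not involve $z$.

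\textbf{Step 3.} Remove $y$ from the other summands. Now $z$ occurs only in $B_1$ and $A_1=y$. Each other summand has factors in $R^{(d)}_n[y]$, and $y$ enters linearly, since degrees lie in $\{0,1\}^d$: $y$ appears in at most one factor of each summand, namely the one whose degree contains $e_1$. Write that factor as $c_i+yu_i$, where $u_i$ is a tensor or scalar of degree $\deg(\text{factor})-e_1$. The contribution $y\,u_iC_i$, where $C_i$ is the partner factor, can be absorbed into the first summand via
\[
y(h_1+z)+(c_i+yu_i)C_i = y(h_1+z+u_iC_i)+c_iC_i.
\]
The new $B_1$ is still homogeneous of degree $\mathbf{1}-e_1$, hence a tensor. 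If $c_i=0$ the summand simply disappears, which only helps.

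\textbf{Step 4.} Conclude. We now have $T+yz=y(h_1'+z)+\sum_{i\geq2}A_i'B_i'$ with all $A_i',B_i'\in R^{(d)}_n$. Setting $y=0$ yields $T=\sum_{i\geq 2}A_i'B_i'$, so $\prk(T)\leq r-1$.

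The main obstacle I expect is Step 3: checking that each move preserves the property of being a tensor (homogeneous of degree in $\{0,1\}^d$) and does not reintroduce $z$. Both follow from multidegree bookkeeping, since $u_iC_i$ has degree $\mathbf{1}-e_1$ and contains no $z$. In fact the multigrading makes this easier than the strength case, because $y$ appears at most linearly in each factor.
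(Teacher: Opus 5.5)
Your proposal is correct and follows essentially the same route as the paper. Both arguments use the move $(A_1+\lambda A_i)(h_1+z)+A_i(B_i-\lambda h_1-\lambda z)$ to confine $z$ to a single summand, then clear $y$ from the remaining summands, and finally compare coefficients (or set $y=0$) to leave $T$ as a sum of $r-1$ products. The one difference is the order of steps: you read off $A_1=y$ from the $z$-coefficient before clearing $y$, which spells out the ``similar replacement'' that the paper leaves implicit.
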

\begin{proof}
     The argument of \Cref{lemma:str-suspension} works mutatis mutandis, but since it's simpler in the partition rank case we will write out the details anyway. In what follows, we assume $d\geq 2$.
     
     The inequality $\prk(T+yz)\leq \prk(T) + 1$ is immediate. For the reverse inequality, suppose we have
\begin{equation}\label{eq:suspension-prk-decomp}
    T + yz = G_1H_1 + \dots +G_rH_r. 
    \end{equation}
    A tensor $U$ which involves both $y,z$ must have degree $\mathbf{1}$; as the degrees of $G_i,H_i$ are strictly positive, the variables $y,z$ do not occur together in any form $G_i$ or $H_i$. Without loss of generality, we therefore assume $G_i\in R^{(d)}_n[y], H_i\in R^{(d)}_n[z]$. Suppose $\deg(G_1)=\dots=\deg(G_s) = e_1$ and $\deg(G_{s+1}),\dots, \deg(G_r)\neq e_1$. In this case $G_{s+1},H_{s+1},\dots, G_r,H_r\in R^{(d)}_n$ and we may write
    \[
    G_i = g_i + \lambda_i y\qquad H_i = h_i + \mu_iz \qquad 1\leq i\leq s\qquad g_i,h_i\in R^{(d)}_n, \lambda_i,\mu_i\in k.
    \]
    As the variable $z$ must appear in the right-hand side, we must have $s\geq 1$ and $\mu_i$ not all zero; without loss of generality $\mu_1 = 1$, so that $H_1 = h_1 + z$.
 For any $2\leq i\leq s$ we have 
    \[
    G_1H_1 + G_iH_i = (G_1 + \mu_i G_i)(h_1 + z) + G_i(H_i - \mu_i h_1 - \mu_i z),
    \]
    so we may iteratively replace $H_2,\dots, H_s$ until $H_2,\dots, H_s\in R^{(d)}_n$. A similar replacement allows us to assume WLOG $G_2,\dots, G_r\in R^{(d)}_n$; equating the coefficients of $y$ and $z$ in both sides of \eqref{eq:suspension-prk-decomp} allows us to conclude that $G_1 = y, H_1 = z$. We may therefore rewrite
    \[
    T + yz = yz + G_2H_2 + \dots + G_rH_r;
    \] we conclude that $\prk(T)\leq r-1$.
\end{proof}

As in the case of strength, combining \Cref{lemma:prk-suspension} with the technique of \Cref{section:inverse-limits} gives non-closedness for all $r \geq 3$.

\begin{cor}\label{cor:prk-higher}
    For every $r \geq 3$ there exist an integer $n$ and a tensor in $(k^n)^{\otimes 4}$ of partition rank $r+1$ that lies in the closure of the locus of tensors of partition rank at most $r$.
\end{cor}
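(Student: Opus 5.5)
The plan is to mirror the proof of \Cref{cor:str-higher}, by induction on $r$. The base case $r = 3$ is \Cref{thm:prk_3_4}. For the inductive step, suppose $T \in R_n^{(4)}$ is a tensor of degree $\mathbf{1}$ with $\prk(T) = r+1$ that lies in the closure of the partition-rank-at-most-$r$ locus in $(k^n)^{\otimes 4}$. We view $T$ as an element of $R^{(4)}$.

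First we choose the suspension variables. Set $y \coloneqq x_{n+1}^{(1)} \in R^{(4)}_{e_1}$ and
\[
z \coloneqq \sum_{m \geq n+2} x_m^{(2)} x_m^{(3)} x_m^{(4)} \in R^{(4)}_{e_2+e_3+e_4}.
\]
We need the variables $x_i^{(j)}$ ($i \leq n$), together with $y$ and $z$, to be part of a system of variables. Degree-one elements are automatic by \Cref{ex:variables}, so it remains to show $z \notin I^2$. An element of $(I^2)_{e_2+e_3+e_4}$ is a finite sum of products of tensors of complementary degrees, so it would have bounded partition rank on every truncation. Hence it suffices to know that the diagonal $3$-tensors $z_m = \sum_{i=n+2}^m x_i^{(2)}x_i^{(3)}x_i^{(4)}$ have unbounded partition rank. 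This is the main point to verify. It follows from the known fact (Naslund, or the slice-rank bound of Tao) that a diagonal tensor with $N$ nonzero diagonal entries has slice rank, and hence partition rank, equal to $N$ when the order is $3$; for order $3$, partition rank coincides with slice rank. Alternatively, one can bound geometric rank from above by partition rank and use that the diagonal tensor has geometric rank $N$.

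Next, by \Cref{lemma:GLgenisvariables}, the ring $\mathcal{A} \coloneqq k[x_i^{(j)}, y, z]$ (with $i \leq n$) is a polynomial ring in these elements, with $\deg y = e_1$ and $\deg z = \mathbf{1} - e_1$. We then need $\prk_{\mathcal{A}}(T + yz) = r+2$. We get this from \Cref{lemma:prk-suspension}, applied to $\mathcal{A} = R_n^{(4)}[y,z]$ with $y$ and $z$ treated as new variables. Here $\prk_{R_n^{(4)}}(T) = r+1$, and we note that the lemma's $S$ is exactly $\mathcal{A}$ with its multigrading. By \Cref{cor:main-technique}, there is some $m > n$ such that $T + x_{n+1}^{(1)} z_m \in R_m^{(4)}$ has partition rank greater than $r+1$. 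On the other hand, it has partition rank at most $r+2$, since $T$ has a decomposition of length $r+1$ and we add the single term $yz_m$. So its partition rank is exactly $r+2$.

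Finally, for closure, the map $(k^n)^{\otimes 4} \to (k^m)^{\otimes 4}$ given by $g \mapsto g + x_{n+1}^{(1)} z_m$ is continuous, being an inclusion followed by a translation. It sends tensors of partition rank at most $r$ to tensors of partition rank at most $r+1$. Since $T$ lies in the closure of the former locus, $T + x_{n+1}^{(1)} z_m$ lies in the closure of the latter. This completes the induction, with $r+1$ in place of $r$. The only nonformal step is the unboundedness of the partition rank of diagonal $3$-tensors, which we quote from the literature.
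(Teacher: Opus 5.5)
Your proposal is correct and follows essentially the same argument as the paper. Both proofs induct from \Cref{thm:prk_3_4}, suspend by $y = x_{n+1}^{(1)}$ and a diagonal $3$-tensor $z$ (shown to lie outside $I^2$ via the Naslund/Tao partition-rank computation), and then apply \Cref{lemma:prk-suspension}, \Cref{cor:main-technique}, and continuity of $g \mapsto g + x_{n+1}^{(1)} z_m$. The only differences are cosmetic: your $z$ starts at index $n+2$ rather than $n+1$, which is harmless since $y$ and $z$ live in different multidegrees, and you also mention geometric rank as an alternative lower bound.
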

\begin{proof}
    We proceed by induction on $r$, the case $r = 3$ being \Cref{thm:prk_3_4}.
    Let $T \in R^{(4)}_n$ be a tensor of degree $\mathbf{1}$ and partition rank $r+1$ in the closure of the locus of partition rank at most $r$, and view $T$ as an element of $R^{(4)}$.
    Set $y \coloneqq x_{n+1}^{(1)}$ and $z \coloneqq \sum_{i \geq n+1} x_i^{(2)}x_i^{(3)}x_i^{(4)} \in R^{(4)}_{\mathbf{1} - e_1}$.
    We claim that the $x_i^{(j)}$ with $i \leq n$, together with $y$ and $z$, are part of a system of variables, and only need to show $z\notin I^2$.
    If $z \in I^2$, then $z$ is a finite sum of $s$ products of tensors of complementary degrees, so each truncation $z_m$ has partition rank at most $s$ as a $3$-way tensor.
    However, $z_m$ is a diagonal $3$-way tensor with $m - n$ nonzero entries, so its partition rank is $m-n$ by \cite[Section~2.2]{naslund} (see also \cite[Lemma~1]{tao}).

    By \Cref{lemma:GLgenisvariables}, $\mathcal{A} \coloneqq R^{(4)}_n[y, z]$ is a polynomial ring with $\deg(y) = e_1$ and $\deg(z) = \mathbf{1} - e_1$, and \Cref{lemma:prk-suspension} gives $\prk_{\mathcal{A}}(T + yz) = r+2$.
    By \Cref{cor:main-technique}, there exists $m > n$ such that $T + x_{n+1}^{(1)}z_m \in R^{(4)}_m$ has partition rank greater than $r+1$, hence equal to $r+2$.
    Finally, the map $(k^n)^{\otimes 4} \to (k^m)^{\otimes 4}$ sending $S \mapsto S + x_{n+1}^{(1)}z_m$ is continuous and sends tensors of partition rank at most $r$ to tensors of partition rank at most $r+1$, so $T + x_{n+1}^{(1)}z_m$ lies in the closure of the locus of tensors of partition rank at most $r+1$.
\end{proof}

\printbibliography
\end{document}